\newtheorem{theorem}{Theorem}[section]
\newtheorem{lemma}{Lemma}[section]
\definecolor{grey}{rgb}{0.6, 0.6, 0.6}
\newcommand{\skippedterms}[1]{\underbrace{#1}}
\newcommand{\kaptil}{\tilde{\kappa}}
\newcommand{\uldeta}{\underline{d\eta}}
\newcommand{\uldph}{\underline{d\hat{p}}}
\newcommand{\uldp}{\underline{dp}}
\begin{document}
\title{Nonlinearity parameter imaging in the frequency domain}
\author{Barbara Kaltenbacher\footnote{
Department of Mathematics,
Alpen-Adria-Universit\"at Klagenfurt.
barbara.kaltenbacher@aau.at.}
\and
William Rundell\footnote{
Department of Mathematics,
Texas A\&M University,
%College Station,
Texas 77843. % USA.
rundell@math.tamu.edu}
}
\date{\vskip-3ex}
\maketitle  
\vspace{-10pt}
\begin{abstract}
Nonlinearity parameter tomography leads to the problem of identifying a coefficient in a nonlinear wave equation (such as the Westervelt equation) modeling ultrasound propagation. In this paper we transfer this into frequency domain, where the Westervelt equation gets replaced by a coupled system of Helmholtz equations with quadratic nonlinearities. For the case of the to-be-determined nonlinearity coefficient being a characteristic function of an unknown, not necessarily connected domain $D$, we devise and test a reconstruction algorithm based on weighted point source approximations combined with Newton's method. 
In a more abstract setting, convergence of a regularised Newton type method for this inverse problem is proven by verifying a range invariance condition of the forward operator and establishing injectivity of its linearisation.
\end{abstract}
\textbf{key words:} nonlinearity parameter tomography, multiharmonic expansion, Westervelt equation, Helmholtz equation, extended sources, point sources, Newton's method, range invariance condition

\section{Introduction}
Nonlinearity parameter tomography 
\cite{Bjorno1986, BurovGurinovichRudenkoTagunov1994, Cain1986, 
IchidaSatoLinzer1983, PanfilovaSlounWijkstraSapozhnikovMischi:2021, VarrayBassetTortoliCachard2011,ZhangChenGong2001, ZhangChenYe1996}, 
is a technique for enhancing ultrasound imaging and amounts to identifying the spatially varying coefficient $\eta=\eta(x)$ 
in the Westervelt equation
\begin{equation}\label{Westervelt}
p_{tt}-c^2\triangle p - b\triangle p_t = \eta(p^2)_{tt} + h \mbox{ in }(0,T)\times\Omega\,, \quad
\end{equation}
where $p$ is the acoustic pressure, $c$ the speed of sound, $b$ the diffusivity of sound, and $h$ the excitation, from observations of the pressure 
\begin{equation}\label{observation_time}
y(x,t) = p(x,t), \quad(x,t)\in\Sigma\times(0,T).
\end{equation}
on some manifold $\Sigma$ immersed in the acoustic domain $\Omega$ or attached to its boundary $\Sigma\in\overline{\Omega}$; see \cite{AcostaUhlmannZhai:2022, BB9, BB10, BB15} and the references therein.

While uniqueness from the Dirichlet-to-Neumann operator has been established in \cite{AcostaUhlmannZhai:2022}, our aim here is to reconstruct $\eta$ from the single boundary measurement \eqref{observation_time} like in \cite{BB9, BB10, BB15}.

\medskip

Here we will consider this problem in the frequency domain, inspired by the concept of harmonic imaging \cite{AnvariForsbergSamir15,Uppal2010,VarrayBassetTortoliCachard2011}. Due to the quadratic nonlinearity appearing in the PDE, this is not directly possible by the usual approach of taking the Fourier transform in time. Rather, the idea is to use a multiharmonic ansatz \cite{periodicWestervelt} as follows.

Assuming periodic excitations of the specific form 
$h(x,t)=\Re(\hat{h}(x)e^{\imath \omega t})$ 
for some fixed frequency $\omega$ and $\hat{h}\in L^2(\Omega;\mathbb{C})$ 
and inserting a multiharmonic expansion for a time periodic solution of \eqref{Westervelt} (that due to periodicity of $h$ can be proven to exist and be unique)
$p(x,t)
= \Re\left(\sum_{k=1}^\infty \hat{p}_k(x) e^{\imath k \omega t}\right)$
into \eqref{Westervelt},
yields the infinite system of coupled linear Helmholtz type PDEs 
\begin{equation}\label{multiharmonic_nonlinear}
\begin{aligned}
&m=1:&& -\omega^2 \hat{p}_1-(c^2+\imath\omega b) \triangle \hat{p}_1 = 
\hat{h}
\skippedterms{ \ 
-\frac{\eta}{2}\omega^2
\sum_{k=3:2}^\infty\overline{\hat{p}_{\frac{k-1}{2}}} \hat{p}_{\frac{k+1}{2}}
}
\\
&m\in \{2,\ldots,M\}:&& -\omega^2 m^2 \hat{p}_m-(c^2+\imath\omega m b) \triangle \hat{p}_m \\&&&\hspace*{3cm}= -\frac{\eta}{4}\omega^2 m^2 \Bigl(\sum_{\ell=1}^{m-1} \hat{p}_\ell \hat{p}_{m-\ell} 
\skippedterms{\ 
+ 2\sum_{k=m+2:2}^\infty\overline{\hat{p}_{\frac{k-m}{2}}} \hat{p}_{\frac{k+m}{2}}
}
\Bigr).
\end{aligned}
\end{equation}
The equivalence of \eqref{multiharmonic_nonlinear} to \eqref{Westervelt} holds with $M=\infty$, as shown in \cite{periodicWestervelt}. 
The fact that in place of a single Helmholtz equation we have a system (in theory even an infinite one) reveals that nonlinearity actually helps the identifiability. This can be explained by the additional information available due to the appearance of several higher harmonics (similarly to several components arising in the asymptotic expansion in \cite{KurylevLassasUhlmann2018}).
In practice, the underbraced terms are often skipped and the expansion is only considered up to $M=2$ or $M=3$. This is due to the fact that the strength of the signal in these higher harmonics decreases extremely quickly. In fact in our reconstructions, only two of them will be of effective use as the third harmonic  only provides marginal improvement over the second one.

In our reconstructions in Section~\ref{sec:reconstructions}, we will focus on the case of a piecewise constant coefficient $\eta=\eta_0\chi_D$ with a known constant $\eta_0$ and an unknown domain $D$, so that \eqref{multiharmonic_nonlinear} (upon skipping the grey terms) becomes
\begin{equation}\label{multharm_obj}
%\triangle u +\kappa^2 u = \kappa^2 \, \chi_D \, f \quad\mbox{ in }\Omega.
\begin{aligned}
&m=1:&& \triangle \hat{p}_1 +\kappa^2\hat{p}_1= \hat{h}
\\
&m\in \{2,\ldots,M\}:&& \triangle \hat{p}_m + m^2\kappa^2\hat{p}_m=\frac{\eta_0}{4} \, \chi_D\,
m^2\kappa^2 \Bigl(\sum_{\ell=1}^{m-1} \hat{p}_\ell \hat{p}_{m-\ell}, 
\Bigr)
\end{aligned}
\end{equation}
where $\kappa=\frac{\omega}{\sqrt{c^2+\imath\omega b}}$ is the wave number.
We do so for practical relevance (e.g., location of contrast agents such as microbubbles on a homogeneous background)
%with linear sound propagation) 
and for expected better identifiability as compared to a general function $\eta$
(although counterexemples to uniqueness still exist cf., e.g., \cite{AlvesMartinsRoberty:2009,KressRundell:2013}, for the Helmholtz equation as opposed to the Laplace equation).
Typically, $D$ will not necessarily be connected but consist of a union of connected components $D=\bigcup_{\ell=1}^{m} D_\ell$ that we will call inclusions or objects for obvious reasons.

Moreover, throughout this paper we assume the sound speed $c$ to be known and constant. For results (in the time domain formulation \eqref{Westervelt}) on simultaneous identification of space dependent functions $c$ and $\eta$, we refer to \cite{BB15}.    

We will consider \eqref{multiharmonic_nonlinear} on a smooth bounded domain $\Omega\subseteq\mathbb{R}^d$, $d\in\{2,3\}$ with observations on a subset of $\partial\Omega$ and equip it with a boundary damping condition 
\begin{equation}\label{impedance} 
\partial_\nu \hat{p}_m +(\imath m\omega\beta + \gamma) \hat{p}_m  = 0 \quad\mbox{ on }\partial\Omega
\end{equation}
with $\beta,\,\gamma\geq0$. These are direct translations to frequancy domain of zero and first order absorbing boundary conditions in time domain, see, e.g., the review
articles~\cite{GivoliBookChapter08,Hagstrom1999} and the references therein. Indeed, these boundary attenuation conditions even allow us to skip the interior damping and assume $\kappa$ to be real valued, as has been shown in \cite{bndystabWestervelt} in the time domain setting of \eqref{Westervelt}. We will do so by working with a real valued wave number $\kaptil$ in the numerical tests of Section~\ref{sec:reconstructions}.

In the case where the observation manifold is contained in the boundary of the domain $\Omega$, we can choose between writing the data \eqref{observation_time} as Dirichlet trace or, via the impedance condition \eqref{impedance}, with $g_m=-(\imath m\kappa+\gamma) y_m$, as Neumann trace 
\begin{equation}\label{observation}
y_m = \hat{p}_m\ \textup{ or }\quad 
g_m = \partial_\nu \hat{p}_m \quad\mbox{ in }\Sigma, \quad m\in\{2,\ldots,M\}.
\end{equation}
In our numerical reconstructions we will also consider the practically relevant case of only partial data being available with $\Sigma \subseteq\partial\Omega$ being a strict subset. %and $M\in\{2,3\}$.
Note that according to the first line in \eqref{multharm_obj}, that does not contain the unknown $D$, observations of the fundamental harmonic $y_1$ or $g_1$ are not expected to carry essential information on $D$ and are therefore neglected.

\section{A reconstruction method for piecewise constant $\eta$ and numerical results} \label{sec:reconstructions}

We first of all consider \eqref{multharm_obj} for $M=2$ and devise a reconstruction method, based on the approach in \cite{KressRundell:2013}.
While the algorithms described below work in both 2-d and 3-d, we confine the exposition and our numerical experiments to two space dimensions.
In our numerical tests we will also study the question of whether taking into account another harmonic $M=3$ improves the results.

Having computed $\hat{p}_1$ from the first equation in \eqref{multiharmonic_nonlinear} with given excitation $\hat{h}$, the problem of determining $\eta$ from the second equation in \eqref{multiharmonic_nonlinear} reduces to an inverse source problem for the Helmholtz equation
\begin{equation}\label{Helmholtz}
\triangle u +\kaptil^2 u = \kaptil^2 \eta \, \tilde{f} \quad\mbox{ in }\Omega
\end{equation}
where $u=\hat{p}_2$, $\kaptil=\frac{2\omega}{c}$, $\tilde{f}=\frac{1}{4c^2}\hat{p}_1^2$. 

In the case of a piecewise constant coefficient as considered here, \eqref{Helmholtz} becomes
\begin{equation}\label{Helmholtz_obj}
\triangle u +\kaptil^2 u = \kaptil^2 \, \chi_D \, f \quad\mbox{ in }\Omega.
\end{equation}
with $f=\eta_0\tilde{f}$.
There exists a large body of work on inverse source problems for the Helmholtz equation.  
Two particular examples for the case of extended sources as related to our setting are \cite{Ikehata:1999,KressRundell:2013}. 
%point sources \cite{ElBadiaNara:2011}
We also point to, e.g., \cite{AcostaChowTaylillamazar:2012,AlvesMartinsRoberty:2009,BaoLinTriki:2010,ChengIsakovLu:2016,EllerValdivia:2009} for inverse source problems with multi frequency data; however these do not cover the important special case of restricting observations to higher harmonics of a single fundamental frequency. 

We here intend to follow the approach from \cite{KressRundell:2013}.
Like there, as an auxiliary problem, we will consider the Helmholtz equation with point sources
\begin{equation}\label{Helmholtz_pts}
\triangle u +\kaptil^2 u = \sum_{k=1}^n \lambda_k \delta_{S_k} \quad\mbox{ in }\Omega.
\end{equation}
with $\delta$ distributions located at points $S_k$,
or more generally with a measure $\mu\in\mathcal{M}(\Omega)=C_b(\overline{\Omega})^*$ as right hand side
\begin{equation}\label{Helmholtz_meas}
\triangle u +\kaptil^2 u = \mu \quad\mbox{ in }\Omega.
\end{equation}
The PDEs \eqref{Helmholtz_obj}, \eqref{Helmholtz_pts}, \eqref{Helmholtz_meas} are equipped with impedance boundary conditions
\begin{equation}\label{imped} 
\partial_\nu u + \imath \kaptil u = 0 \quad\mbox{ on }\partial\Omega.
\end{equation}
Results on well-posedness of the forward problems \eqref{Helmholtz}, \eqref{imped} and  \eqref{Helmholtz_pts}, \eqref{imped} can be found, e.g., in \cite[Section VIII]{Melenk:1995} and \cite[Section 2]{PTTW:2020}. 

\medskip

An essential fact connecting \eqref{Helmholtz_obj} and \eqref{Helmholtz_pts} is that for any solution $w$ of the homogeneous Helmholtz equation $\triangle w +\kaptil^2 w=0$ on $\Omega$, from Green's second identity, written in the form
\[
\int_\Omega \Bigl(u\, (\triangle w +\kaptil^2 w) - w\, (\triangle u +\kaptil^2 u)\Bigr)\, dx
= \int_{\partial\Omega} \Bigl(u\, (\partial_\nu w+\imath\kaptil w) - w\, (\partial_\nu u+\imath\kaptil u) \Bigr)\, ds
\]
the following relations hold
\begin{equation}\label{moment_Helmholtz}
\begin{aligned}
&\int_{\partial\Omega}\partial_\nu u\, (\partial_\nu w+\imath\kaptil w)\, ds\\[-1ex] 
&= -\imath\kaptil\int_{\partial\Omega} u\, (\partial_\nu w+\imath\kaptil w)\, ds 
= \begin{cases}
\imath\kaptil\int_D \kaptil^2\, f\, w\, dx&\mbox{ for \eqref{Helmholtz_obj}, \eqref{impedance}}\\
\imath\kaptil \sum_{k=1}^n \lambda_k w(S_k) &\mbox{ for \eqref{Helmholtz_pts}, \eqref{impedance}.}
\end{cases}
\end{aligned}
\end{equation}
Combining this with a mean value identity for the Helmholtz equation
\begin{equation}\label{meanvalue_Helmholtz}
\frac{1}{|B_r(x_0)|}\int_{B_r(x_0)} w \, dx = \Gamma(\tfrac{d}{2}+1) \frac{J_{d/2}(\kaptil\, r)}{(\kaptil\, r/2)^{d/2}} \, w(x_0)
\end{equation}
for any $r>0$, and $x_0\in\Omega$ such that $B_r(x_0)\subseteq\Omega$, and $w$ solving $\triangle w +\kaptil^2 w=0$ (see, e.g., \cite{Kuznetsov:2021} and the references therein), equivalence of \eqref{Helmholtz_obj}, \eqref{Helmholtz_pts} in the case of constant background $f$ is obtained. 

\begin{lemma} \label{lem:discs}
Assume that $D$ can be represented as the union of finitely many disjoint discs or balls.
Then the flux moments $\int_{\partial\Omega}\partial_\nu u\, (\partial_\nu w+\imath\kaptil w)\, ds$ (for $w$ in the kernel of $\triangle w +\kaptil^2\textup{id}$) of $u$ solving the Helmholtz equation \eqref{Helmholtz_obj}, \eqref{imped} with $f\equiv \textup{const.}$ coincide with the flux moments resulting from finitely many weighted point sources \eqref{Helmholtz_pts}, \eqref{imped}.
\end{lemma}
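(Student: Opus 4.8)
The plan is to chain together the two identities already displayed: the flux-moment representation \eqref{moment_Helmholtz} and the Helmholtz mean value identity \eqref{meanvalue_Helmholtz}. Write $D=\bigcup_{j=1}^N B_{r_j}(x_j)$ as the assumed disjoint union of balls, all contained in $\Omega$, and let $f\equiv f_0$ be the constant source value. By the first branch of \eqref{moment_Helmholtz}, for every $w$ in the kernel of $\triangle+\kaptil^2\textup{id}$ the flux moment of the extended-source solution of \eqref{Helmholtz_obj}, \eqref{imped} equals $\imath\kaptil\int_D \kaptil^2 f_0\, w\,dx$. Since the balls are disjoint, the domain integral splits as $\int_D w\,dx=\sum_{j=1}^N \int_{B_{r_j}(x_j)} w\,dx$.

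First I would apply \eqref{meanvalue_Helmholtz} to each ball separately---which is legitimate precisely because each $B_{r_j}(x_j)\subseteq\Omega$---to convert every integral into a point evaluation at the centre,
\[
\int_{B_{r_j}(x_j)} w\,dx = |B_{r_j}(x_j)|\,\Gamma(\tfrac{d}{2}+1)\,\frac{J_{d/2}(\kaptil\, r_j)}{(\kaptil\, r_j/2)^{d/2}}\,w(x_j).
\]
Substituting this back yields $\imath\kaptil\int_D \kaptil^2 f_0\, w\,dx = \imath\kaptil\sum_{j=1}^N \lambda_j\,w(x_j)$ with the explicit weights
\[
\lambda_j := \kaptil^2 f_0\,|B_{r_j}(x_j)|\,\Gamma(\tfrac{d}{2}+1)\,\frac{J_{d/2}(\kaptil\, r_j)}{(\kaptil\, r_j/2)^{d/2}}.
\]

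The conclusion then follows by recognising the right-hand side as exactly the flux moment produced by the point-source problem \eqref{Helmholtz_pts}, \eqref{imped} with $n=N$ point sources located at the centres $S_j=x_j$ and carrying the weights $\lambda_j$ above; this is the second branch of \eqref{moment_Helmholtz}. Since the resulting identity holds for every $w$ in the kernel of $\triangle+\kaptil^2\textup{id}$, the two families of flux moments coincide, which is the claim.

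There is essentially no analytic obstacle here---the statement is a direct consequence of the two displayed identities---so the only points requiring care are the bookkeeping ones: verifying that each ball lies inside $\Omega$ so that \eqref{meanvalue_Helmholtz} applies, and using disjointness to split the integral over $D$. One mildly subtle phenomenon worth flagging is that $\lambda_j$ vanishes whenever $\kaptil\, r_j$ is a zero of $J_{d/2}$; such a ball then contributes nothing to any flux moment and is effectively invisible to the data, an effect peculiar to the Helmholtz (as opposed to the Laplace) setting.
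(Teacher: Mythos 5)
Your proof is correct and follows exactly the route the paper intends: combining the two branches of the moment identity \eqref{moment_Helmholtz} with the mean value identity \eqref{meanvalue_Helmholtz} applied ball by ball, which is precisely the argument sketched in the text immediately preceding the lemma. Your explicit weight formula $\lambda_j = \kaptil^2 f_0\,|B_{r_j}(x_j)|\,\Gamma(\tfrac{d}{2}+1)\,J_{d/2}(\kaptil r_j)/(\kaptil r_j/2)^{d/2}$ (and the observation that it vanishes at zeros of $J_{d/2}$) is a useful addition consistent with how the paper later recovers radii from weights in step (2.) of Algorithms 0 and 1.
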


%\footnote{On the negative side, this is a nonuniqueness result if one considers the set of possible unknowns to comprise the union of circular objects and weighted point sources. However, we here just aim at identifying extended objects (that are in a higher regularity class as compared to point sources) and use the point sources only as auxiliary computational tools.} 

The method from \cite{KressRundell:2013} uses a Pad\'e approximation scheme (see \cite{HankeRundell:2011}, which was inspired by \cite{ElBadiaHaDuong:2000}) for recovering point sources in the Laplace equation and a fixed point scheme to extend this for finding point sources in the Helmhotz equation \eqref{Helmholtz_pts}. 
This is proven to converge in \cite[Theorem 1]{KressRundell:2013} for sufficiently small wave numbers $\kaptil$ and the numerical experiments there show that it works exceedingly well for $\kaptil\leq 1$.
However, in ultrasonics, $\kaptil$ is large. Transition from the Laplace point source problem 
%(P3) in KK13 
to the Helmholtz point source problem 
%(P2) in \cite{KressRundell:2013} 
therefore does not seem to be feasible in that situation. 
However, transition from the Helmholtz point source problem \eqref{Helmholtz_pts} the Helmholtz inclusion problem \eqref{Helmholtz_obj} is still justified by Lemma ~\ref{lem:discs}, in case of circular or spherical inclusions and a constant background $f$.

In place of the Pad\'e approximation algorithm in \cite{KressRundell:2013}, we employ the primal-dual active point PDAP algorithm from \cite{BrediesPikkarainen:2013,PTTW:2020}, which we provide here, for the convenience of the reader. It uses the forward operator $F:\mathcal{M}(\Omega)\to L^2(\Sigma)$, $\mu\mapsto \partial_\nu u\vert_\Sigma$, 
\footnote{$L^2(\Sigma)$ regularity of the flux (in spite of the low $W^{1,q}(\Omega)$, $q<\frac{d}{d-1}$ regularity of $u$) is obtained by bootstrapping from the homogeneous impedance conditions in case of $\Sigma\subseteq \partial \Omega$; otherwise, an assumption of the source domain to be at distance from $\Sigma$ needs to be imposed in order to be able to invoke interior elliptic regularity.}
where $u$ solves \eqref{Helmholtz_meas}, \eqref{imped} and its Banach space adjoint $F^*$.

\smallskip

\noindent
{\bf Algorithm PDAP:}\\[1ex]
For $i=1,2,3,\ldots$
\begin{enumerate}
\item Compute $\xi^i:=F^*(F\mu^i-g)$; determine $\hat{S}^i\in\textup{argmax}_{x\in\Omega}|\xi(x)|$
\item Set $(S^i_1,\ldots,S^i_n):=\textup{supp}(\mu^i)\cup\{\hat{S}^i\}$; 
\item compute a minimizer $\vec{\lambda}^i\in\mathbb{R}^n$ of $j(\vec{\lambda}):=\|F\sum_{k=1}^n\lambda_k\delta_{S^i_k} -g\|^2$  
\item Set $\mu^{i+1}=\sum_{k=1}^n \lambda^i_k \delta_{S^i_k}$
\end{enumerate}

Combining this with the other elements from the method in \cite{KressRundell:2013}, we arrive at the following scheme in case of constant background.

\smallskip

\noindent
{\bf Algorithm 0:}\\[1ex]
Given boundary flux 
$g = g_D =\sum_{\ell=1}^m g_{D_\ell} $
arising from the $m$ unknown objects $D_\ell$ (each of which is the union of $n_\ell$ discs)
with constant background $f$.
\begin{enumerate}
\item[(1.)] 
Identify 
$n = \sum_{\ell=1}^m n_\ell \geq m$
equivalent point sources $S_k$ and weights $\lambda_k$ according to Lemma~\ref{lem:discs} using Algorithm PDAP.\\ 
This also yields a decomposition
$g = g_D =\sum_{k=1}^n g_{pts_k}$ 
of the given data;
\item[(2.)] 
Determine the radii of equivalent discs from weights $\lambda_k$ via the mean value property \eqref{meanvalue_Helmholtz}.\\
Merge these discs into $m$ objects: two discs belong to the same object if their intersection is nonempty;\\ 
Assigning discs and therewith equivalent point sources to objects 
$g_{pts_k} \to g_{pts_{\ell,j}} $
for 
$k\in\{1,\ldots,n\}$, 
$\ell\in\{1,\ldots,m\}$, 
$j\in\{1,\ldots,n_\ell\}$,
also yields a decomposition of the given data:
$g = g_D =\sum_{\ell=1}^m g_\ell$, %= \sum_{l=1}^m \sum_{j=1}^{n_l} g_{pts_{l,j}}
where 
$g_\ell = \sum_{j=1}^{n_\ell} g_{pts_{\ell,j}}$.
\item[(3.)]
For each object 
$D_\ell$, $\ell\in\{1,\ldots,m\}$,
separately, determine the object boundary parametrised by a curve $q_\ell$ from moment matching \eqref{moment_Helmholtz} of data $g_\ell$, using a Newton iteration;\\
\end{enumerate}

As a starting value for each curve $q_\ell$ in (3.) we use the disc with the centroid of the union of discs belonging to the $\ell$-th object as a center and the radius corresponding to the sum of weights within the $\ell$-th object via \eqref{meanvalue_Helmholtz}.
Alternatively to (3.), one could use algorithms from computational geometry for determining the boundary of a union of discs, see, e.g., \cite{EzraHalperinSharir:2002,GoodmanPachPollack:2008} and the citing literature.

\medskip

In case of variable background $f$ as relevant here, cf.  \eqref{multharm_obj}, and/or a set $D$ that is not a finite union of discs, the representation by equivalent discs is not exact and therefore the decomposition of the data according to objects is not valid any more. We therefore replace (3.) by a simultaneous Newton based matching of the flux data $g$ (not of its moments) to the flux data computed from forward simulations according to the collection of parametrised object boundaries.    
We can still regard the discs obtained by (2.) as good starting guesses for Newton's method and thus proceed as follows.

\smallskip

\noindent
{\bf Algorithm 1:}\\[1ex]
given boundary flux 
$g = g_D =\sum_{\ell=1}^m g_{D_\ell} $
arising from the $m$ unknown objects $D_\ell$ 
\begin{enumerate}
\item[(1.)] 
Identify 
$n = \sum_{\ell=1}^m n_\ell \geq m$
approximately equivalent point sources $S_k$ and weights $\lambda_k$ by Algorithm PDAP;
\item[(2.)] 
Determine disc radii from weights $\lambda_k$ via the mean value property \eqref{meanvalue_Helmholtz}.\\
Merge discs to $m$ objects: two discs belong to the same object if their intersection is nonempty;
\item[(3.)]
For all objects $D_\ell$, $\ell\in\{1,\ldots,m\}$, simultaneously, determine the object boundaries parametrised by curves $q_\ell$ by matching the combined observational data \eqref{observation}, using a Newton iteration.
\end{enumerate}

The choice of a starting value for $q_\ell$ in (3.) is the same as in Algorithm 0, namely a disc with center determined as centroid of all discs pertaining to the $\ell$-th object and radius determined by using the sum of weights in \eqref{meanvalue_Helmholtz}.

\subsection{Reconstructions}

Our forward solvers for \eqref{Helmholtz}, \eqref{imped} (in the special cases \eqref{Helmholtz_obj},\eqref{Helmholtz_pts}, of \eqref{Helmholtz}) rely on the fact that with the fundamental solution to the Helmhotz equation
$\mathcal{G}(x)=\frac{\imath}{4} H_0^1(\kaptil|x|)$ in two space dimensions, the solution to
\[
\triangle u^{\mathbb{R}^2} +\kaptil^2 u^{\mathbb{R}^2} = f \quad\mbox{ in }\mathbb{R}^2
\] 
can be determined by convolution $u^{\mathbb{R}^2}=\mathcal{G}*f$.
It thus remains to solve the homogeneous boundary value problem   
\[
\triangle u^{\textup{d}} +\kaptil^2 u^{\textup{d}} = 0 \quad\mbox{ in }\Omega\,, \quad
\partial_\nu u^{\textup{d}} +\imath\kaptil u^{\textup{d}} = g
\] 
with $g=-\partial_\nu u^{\mathbb{R}^2} -\imath\kaptil u^{\mathbb{R}^2}$, which we do by the integral equation approach described in \cite[Sections 3.1, 3.4]{ColtonKress:2013}, that easily extends to the case of impedance boundary conditions.
The solution to \eqref{Helmholtz}, \eqref{imped} is then obtained as $u=u^{\mathbb{R}^2}+u^{\textup{d}}$.
We point to the fact that solving the Helmholtz equation with large wave numbers is a challenging task and a highly active field of research, see, e.g., \cite{LafontaineSpenceWunsch:2022,MelenkSauterTorres:2020,PeterseimVerfuerth:2020} and the references therein.
Since our emphasis lies on a proof of concept for parameter identification, we did not implement any of these high frequency solvers here.

In all our reconstructions it is apparent that the point source reconstruction algorithm from \cite{BrediesPikkarainen:2013,PTTW:2020} combined with the equivelant discs approximation -- that is, steps (1.) and (2.) in Algorithm 1 -- provides an extremely good initial guess of the curves to be recovered. This is essential for the convergence of Newton's method in view of the high nonlinearity of the shape identification problem.

\medskip

\paragraph{Using the third harmonic $M=3$:}

\begin{figure}[htbp]
\begin{center}
\includegraphics[width=0.19\textwidth]{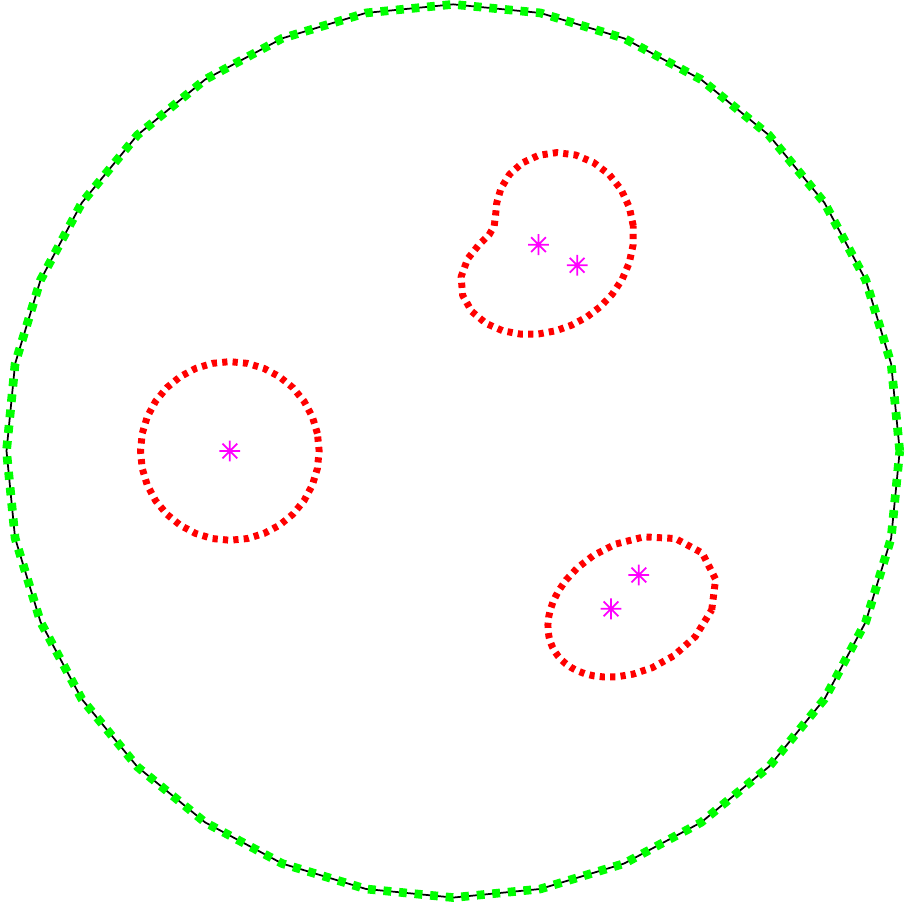}
\includegraphics[width=0.19\textwidth]{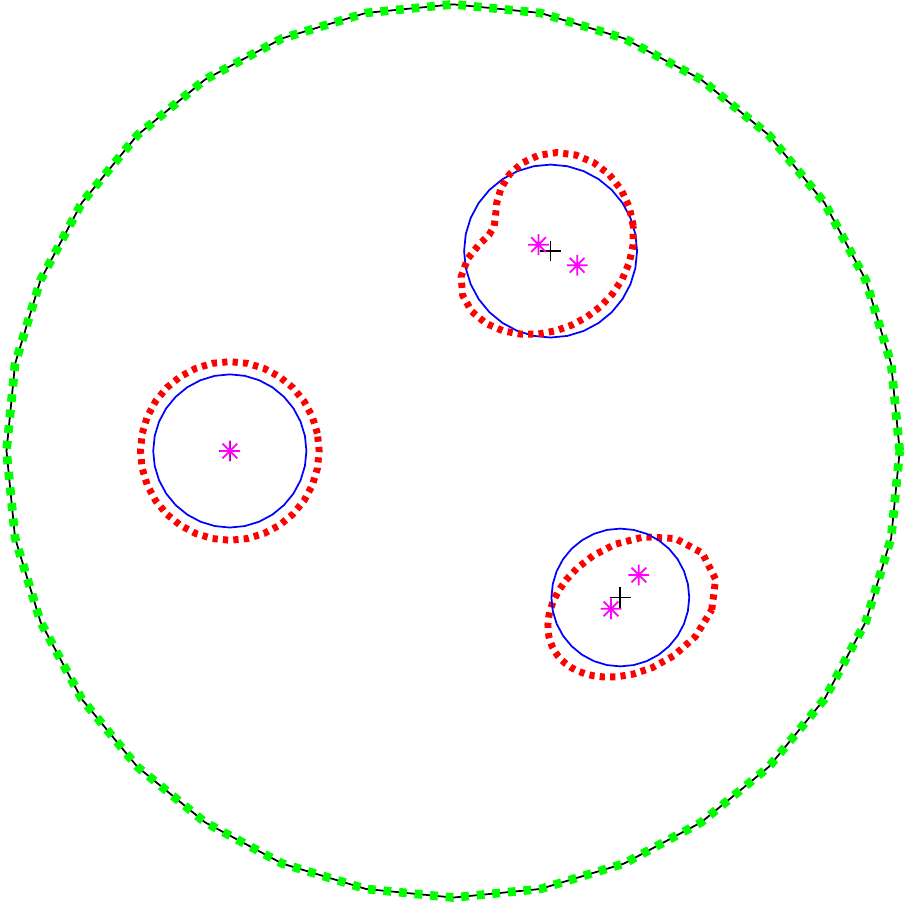}
\includegraphics[width=0.19\textwidth]{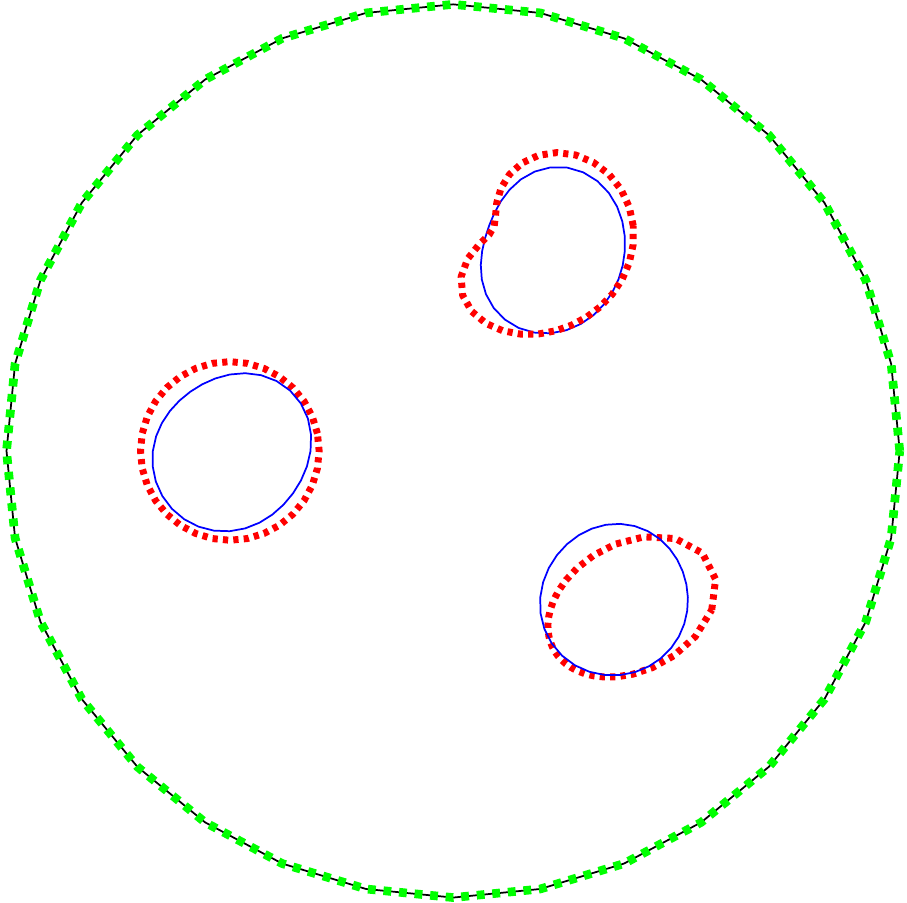}
\includegraphics[width=0.19\textwidth]{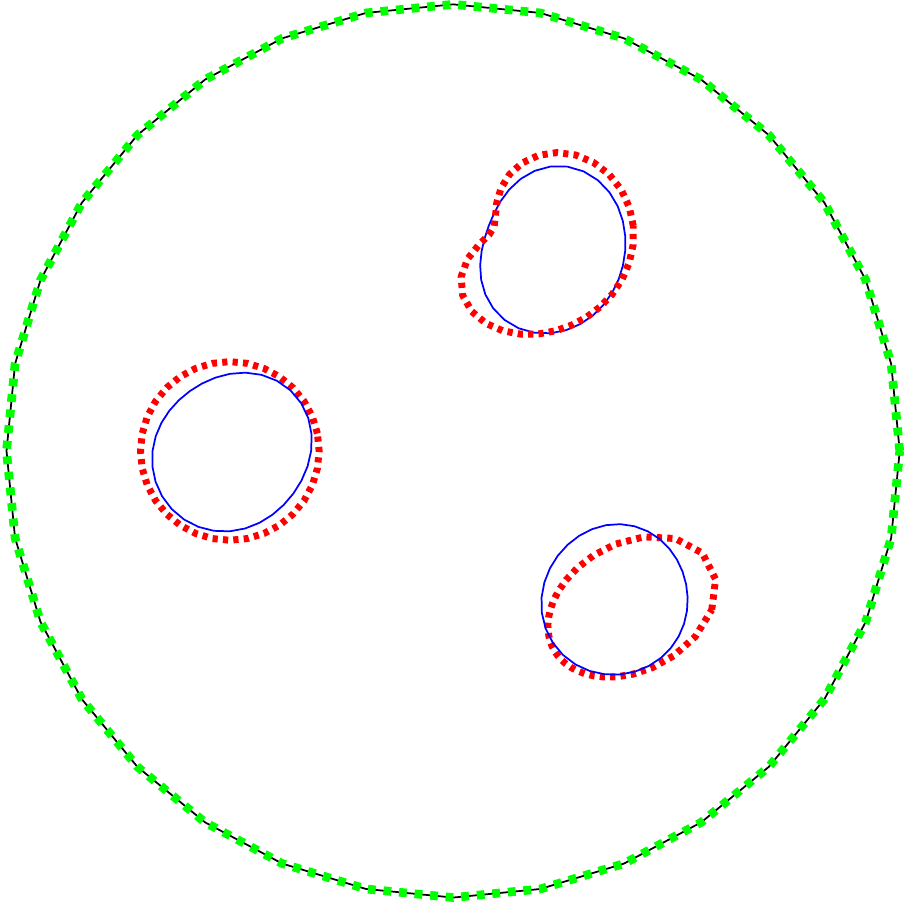}
\includegraphics[width=0.19\textwidth]{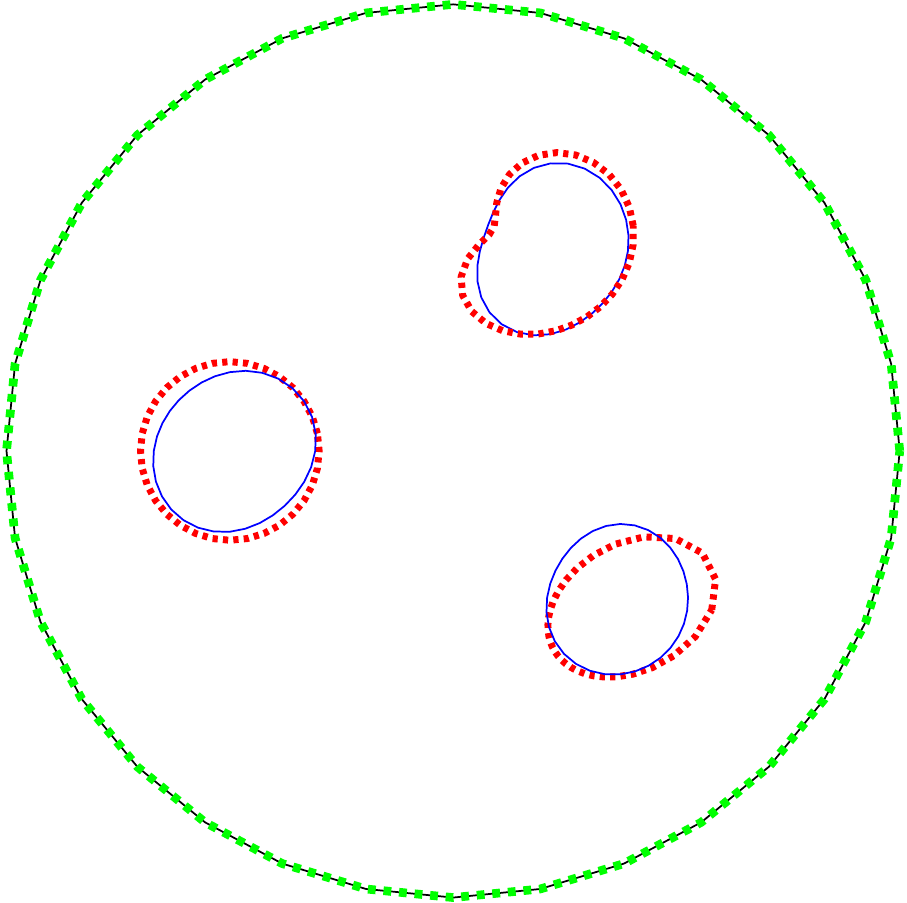}
\\[0.5ex]
\includegraphics[width=0.19\textwidth]{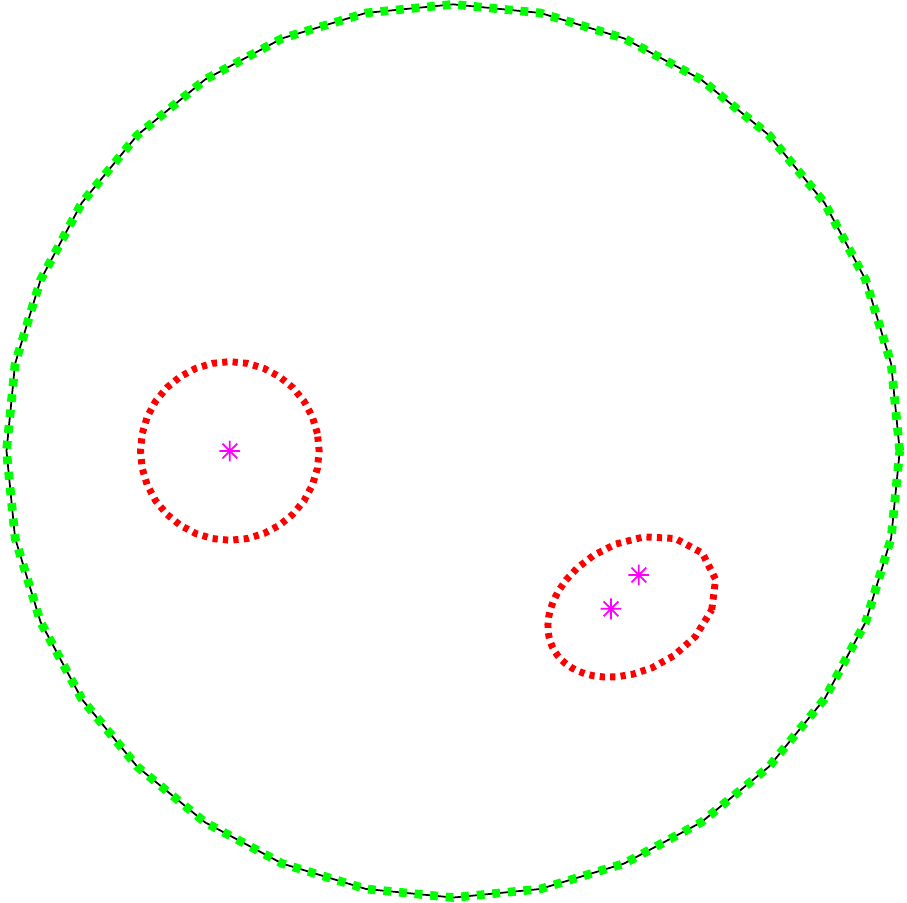}
\includegraphics[width=0.19\textwidth]{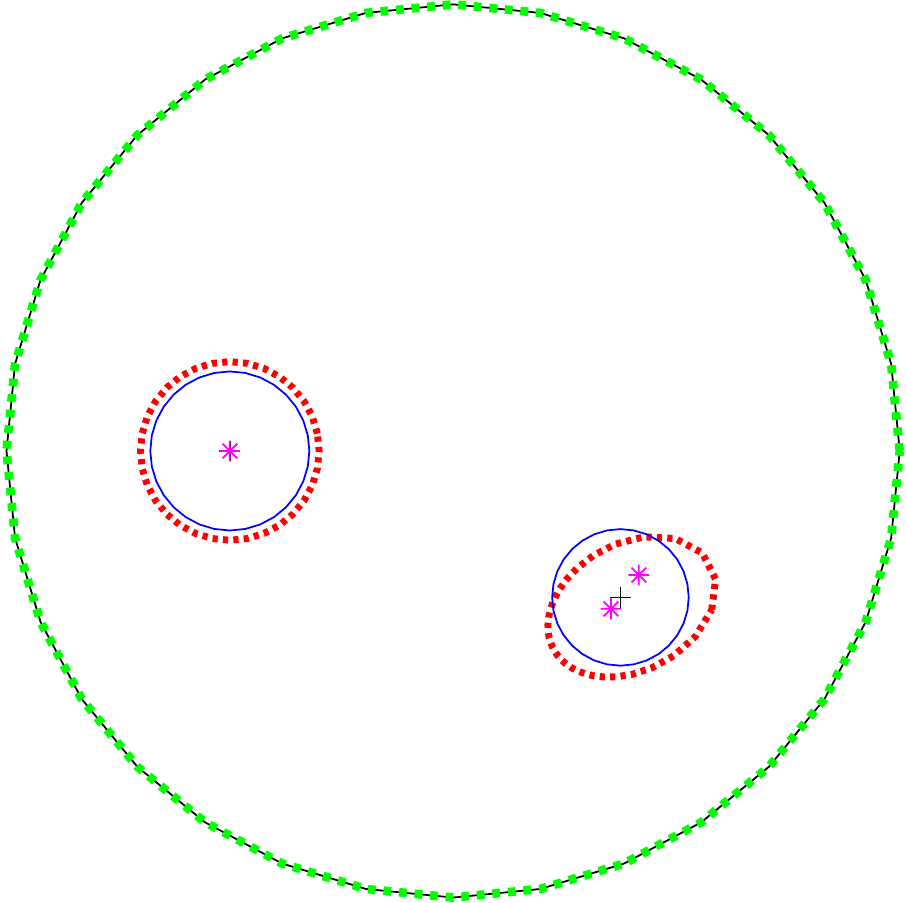}
\includegraphics[width=0.19\textwidth]{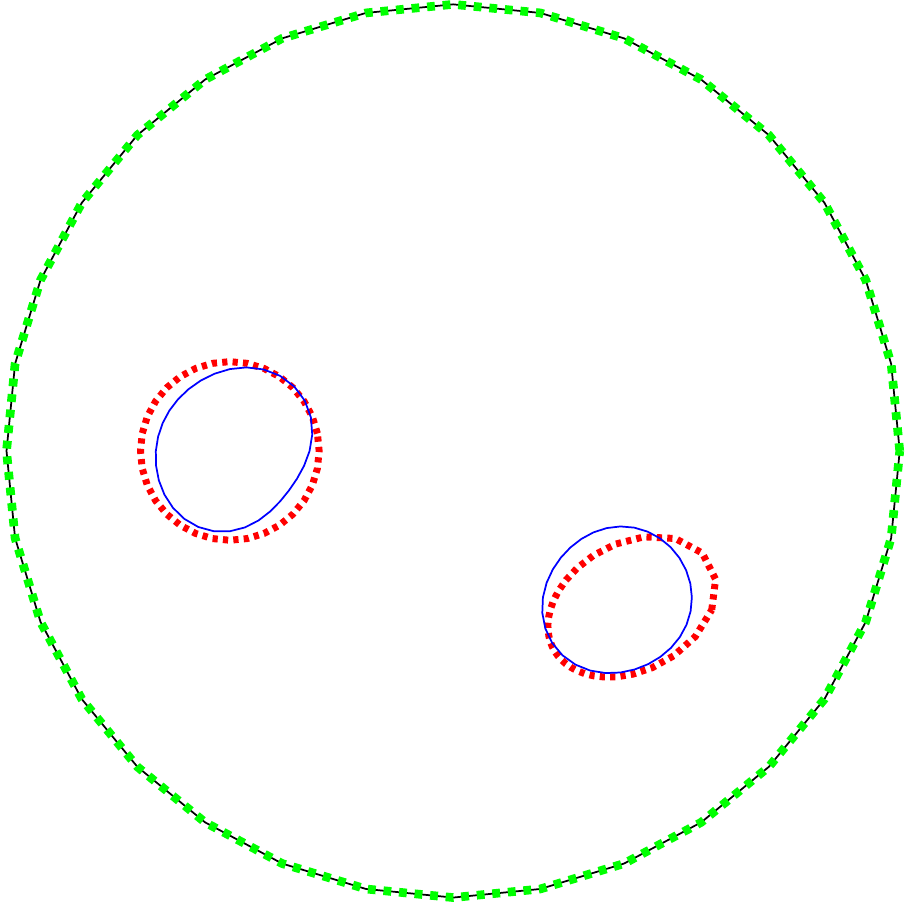}
\includegraphics[width=0.19\textwidth]{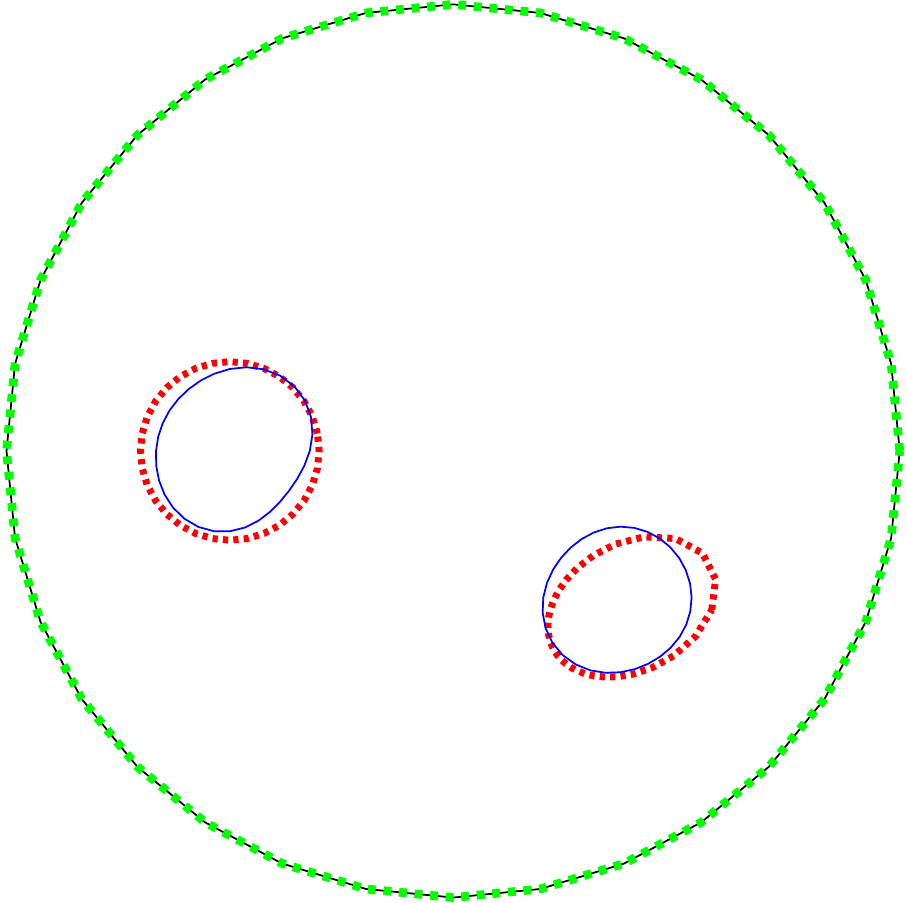}
\includegraphics[width=0.19\textwidth]{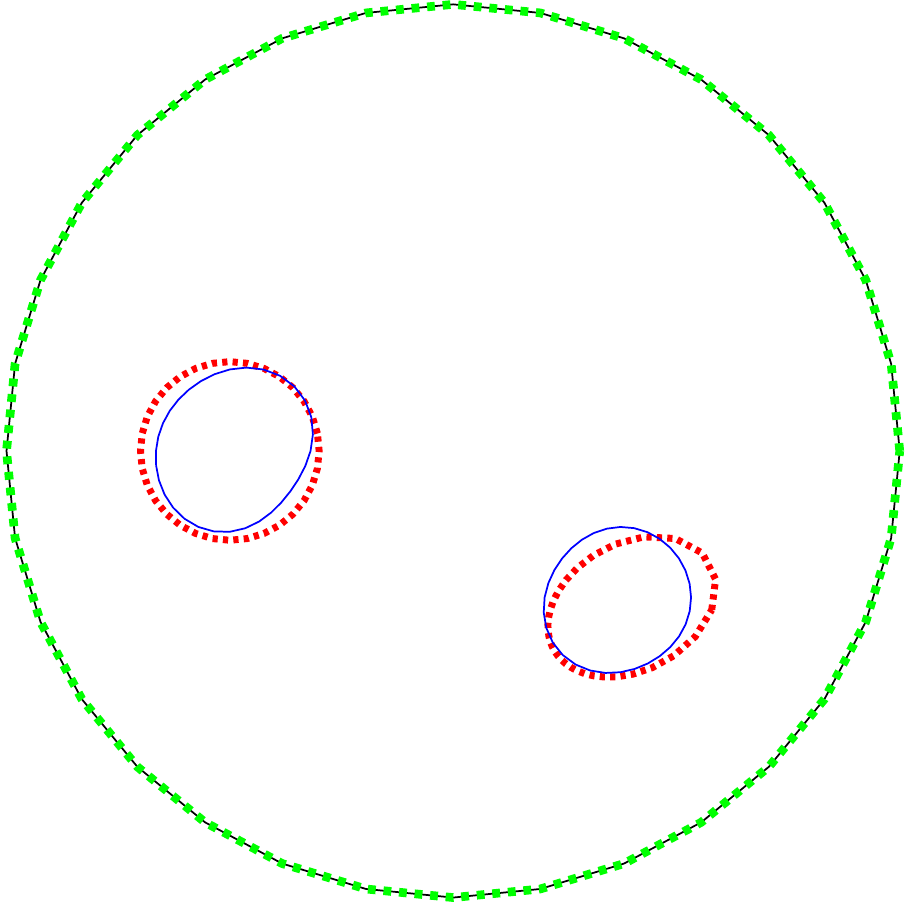}
\\[0.5ex]
\hspace*{0.08\textwidth}(a)\hspace*{0.165\textwidth}(b)\hspace*{0.165\textwidth}(c)\hspace*{0.165\textwidth}(d)\hspace*{0.165\textwidth}(e)\hspace*{0.08\textwidth}
\caption{Reconstruction of three (top row) or two (bottom row) inclusions from full data: (a) point sources step (1.) of Algorithm 1; (b) equivalent disks step (2.) of Algorithm 1; (c) Newton with second harmonic; (d) Newton with third harmonic; (e) Newton with second and third harmonic
\label{fig:3obj}}
\end{center}
\end{figure}

The reconstructions in Figure~\ref{fig:3obj} are obtained by following the steps of Algorithm 1 at wave number $\kaptil=10$ and then carrying out another Newton step with data from the third harmonic at $\kaptil=15$ either: (d) sequentially, using the result from  $\kaptil=10$ as a starting value or, (e) applying Newton's method simultaneously to $\kaptil=10$ and $\kaptil=15$.

The numerical results indicate that the additional information obtained from the next ($m=3$) harmonic does not yield much improvement. This is due to the lower -- by two to three orders of magnitude -- intensity of the signal at that higher frequency and seems to confirm the experimental evidence and common practice of skipping higher than second harmonics.  

\bigskip

\begin{figure}[htbp]
\begin{center}
\includegraphics[width=0.19\textwidth]{polpl_3obj_Tmeas1_Disks.eps}
\includegraphics[width=0.19\textwidth]{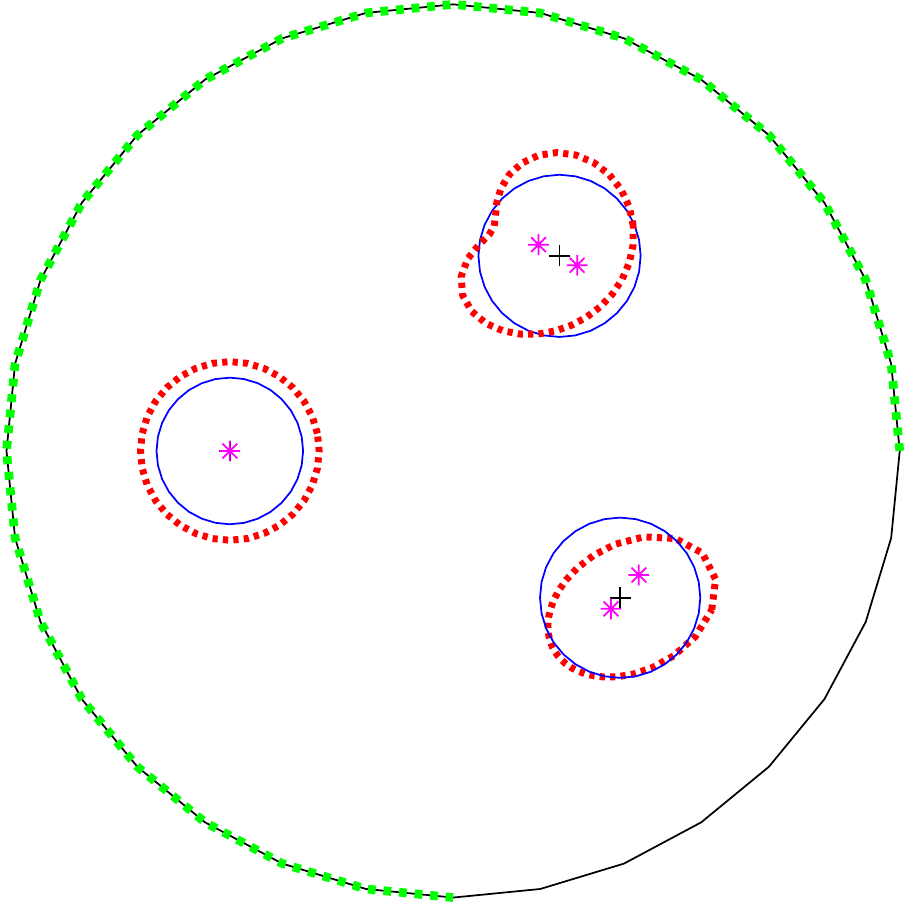}
\includegraphics[width=0.19\textwidth]{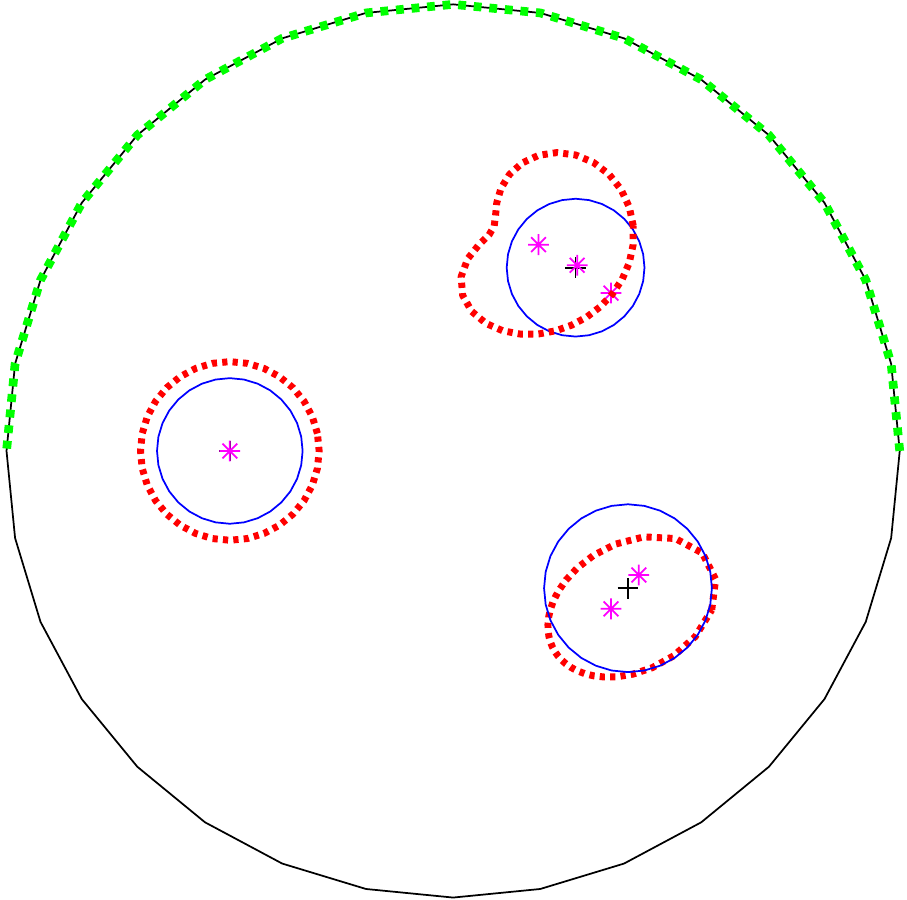}
\includegraphics[width=0.19\textwidth]{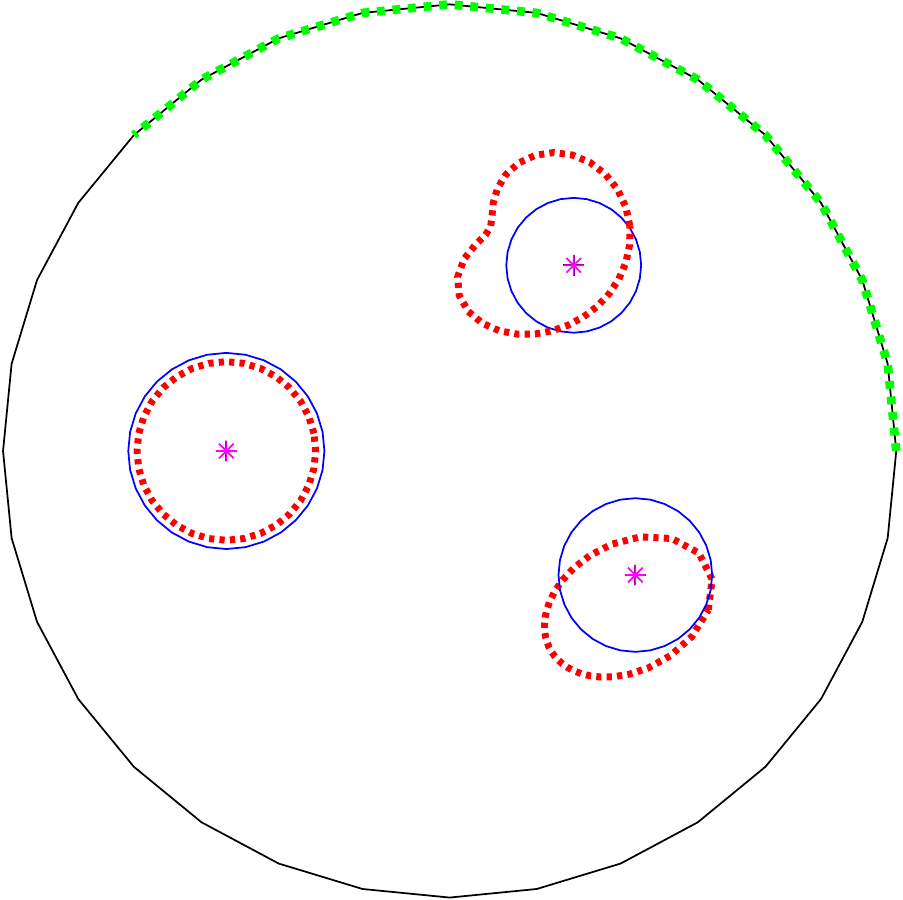}
\includegraphics[width=0.19\textwidth]{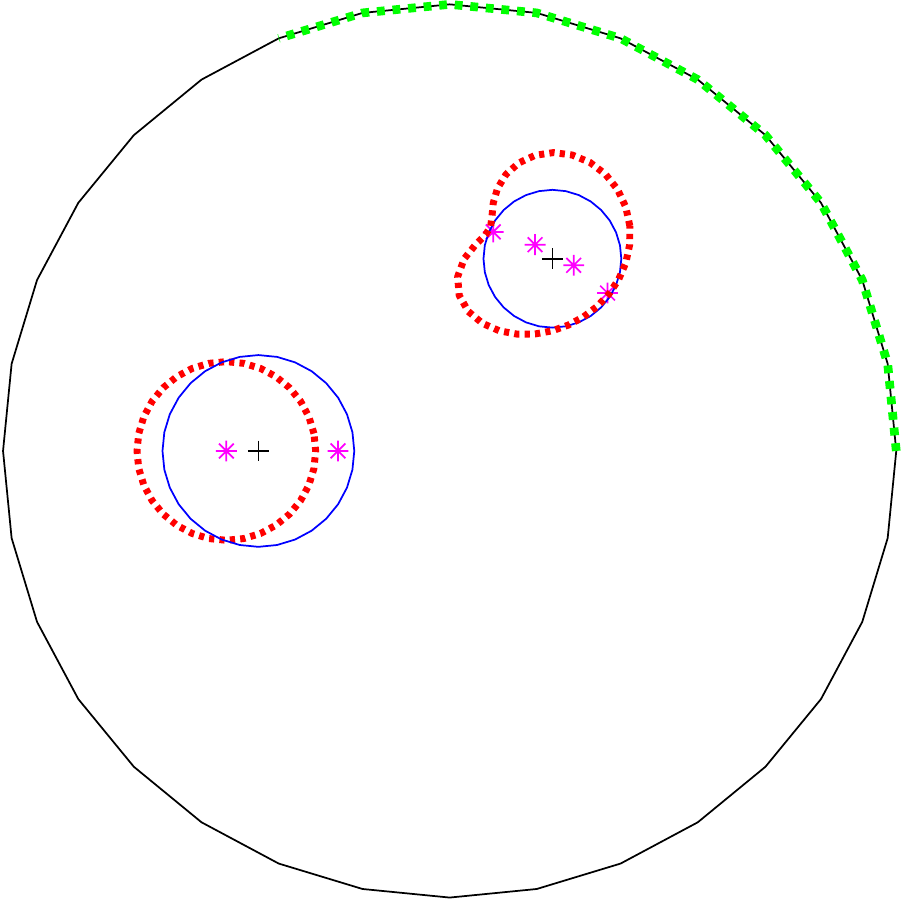}
\\[2ex]
\includegraphics[width=0.19\textwidth]{polpl_3obj_Tmeas1_Newton10.eps}
\includegraphics[width=0.19\textwidth]{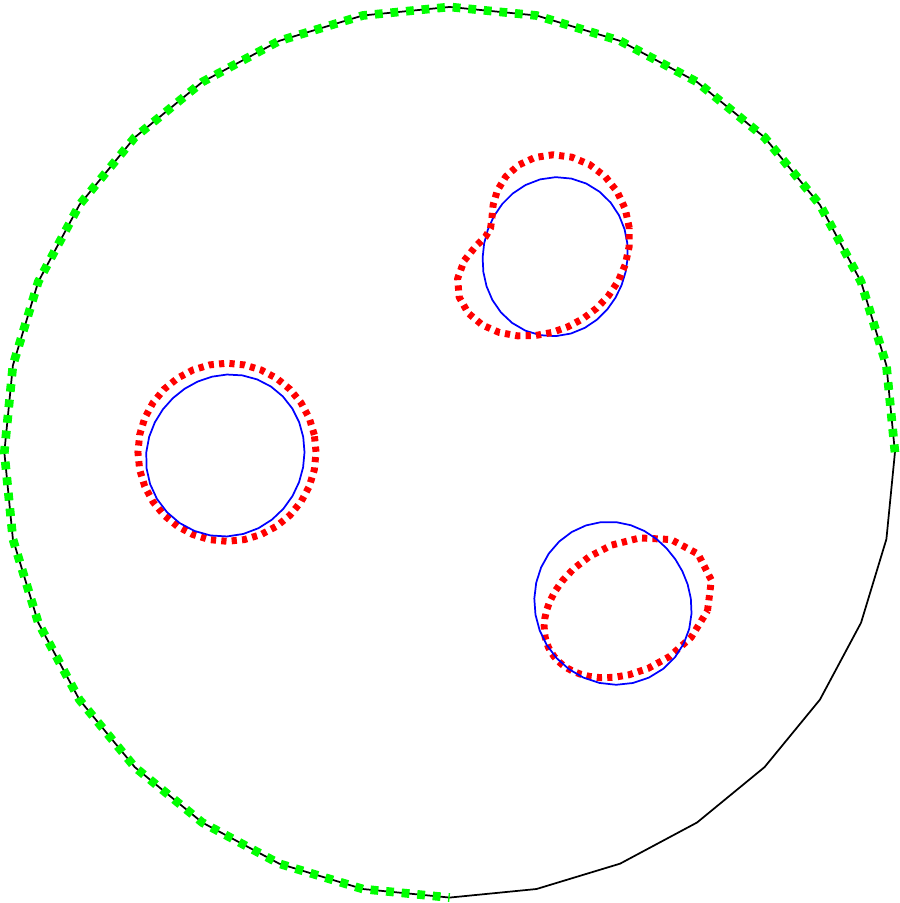}
\includegraphics[width=0.19\textwidth]{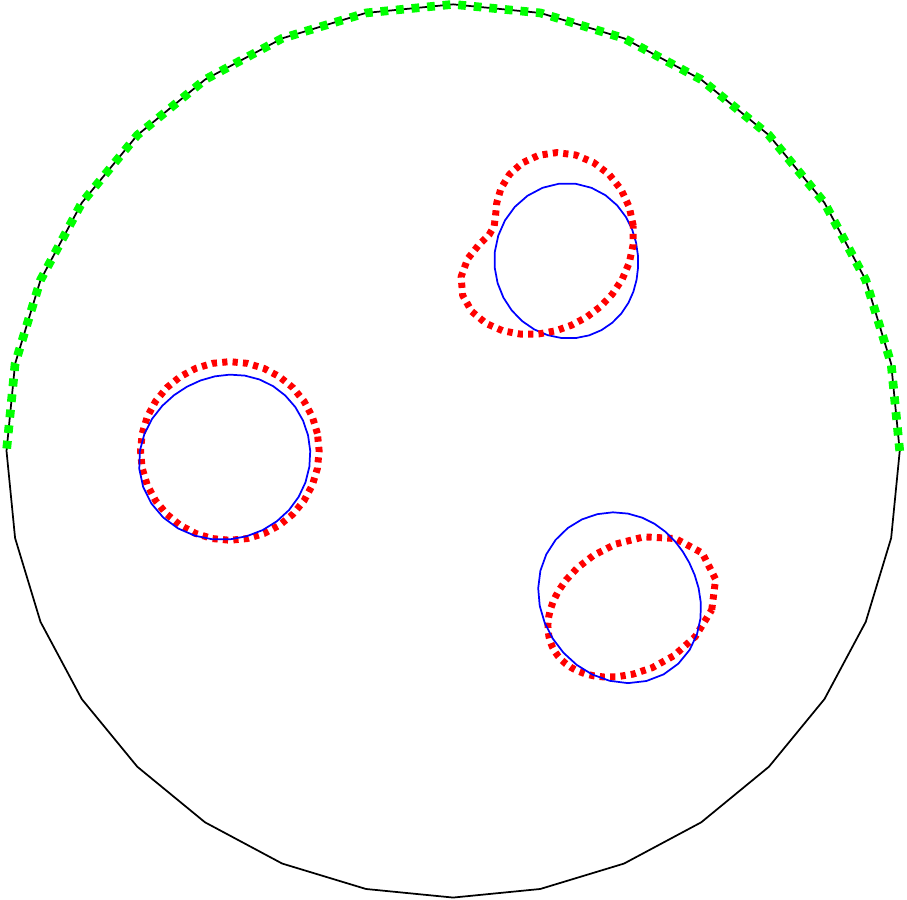}
\includegraphics[width=0.19\textwidth]{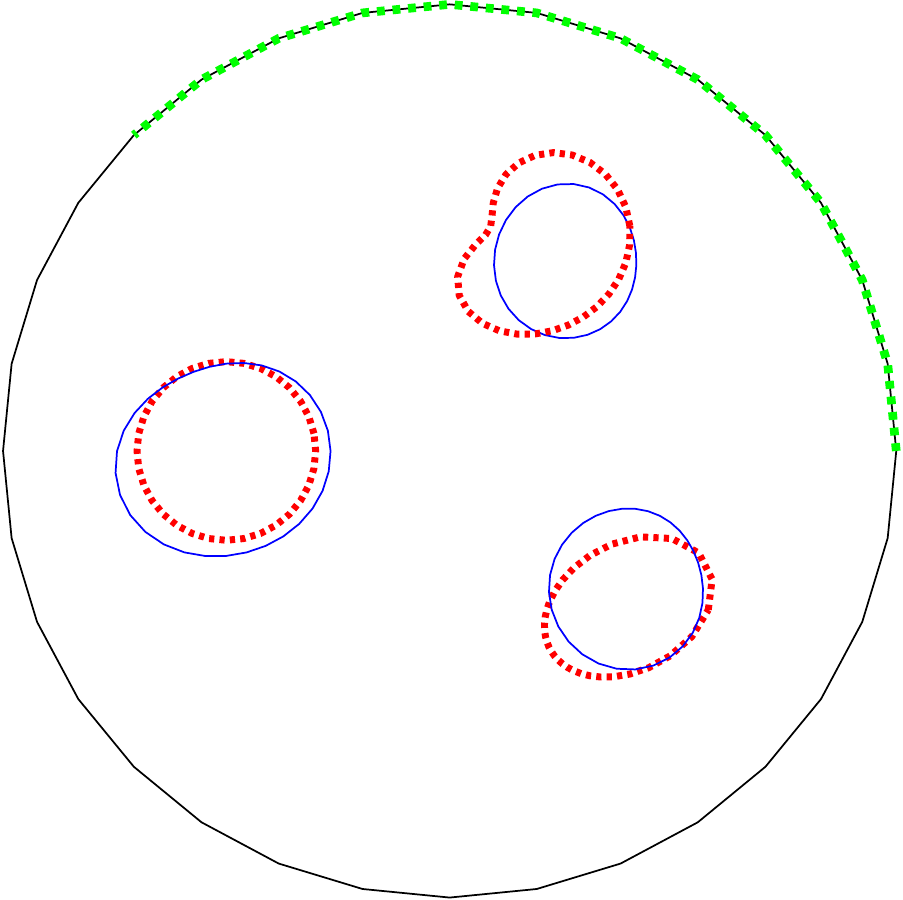}
\includegraphics[width=0.19\textwidth]{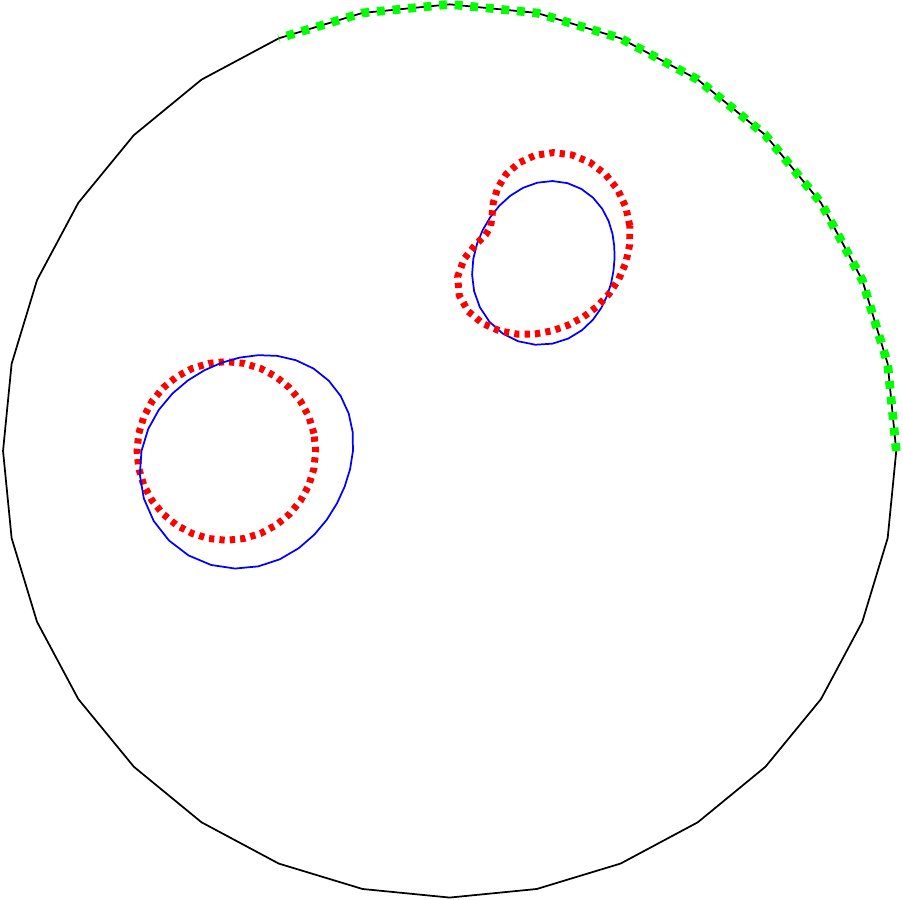}
\\[2ex]
(a) $\frac{\alpha}{2\pi}=1$ \hspace*{0.05\textwidth}
(b) $\frac{\alpha}{2\pi}=0.75$ \hspace*{0.05\textwidth}
(c) $\frac{\alpha}{2\pi}=0.5$ \hspace*{0.05\textwidth}
(d) $\frac{\alpha}{2\pi}=0.4$ \hspace*{0.05\textwidth}
(e) $\frac{\alpha}{2\pi}=0.3$ \hspace*{-0.01\textwidth}
\caption{Reconstruction of three inclusions from partial data; 
top row: equivalent point sources and disks; bottom row: boundary curves from Newton's method
\label{fig:3obj_partial}}
\end{center}
\end{figure}

\begin{figure}[htbp]
\begin{center}
\includegraphics[width=0.19\textwidth]{polpl_2obj_Tmeas1_Disks.eps}
\includegraphics[width=0.19\textwidth]{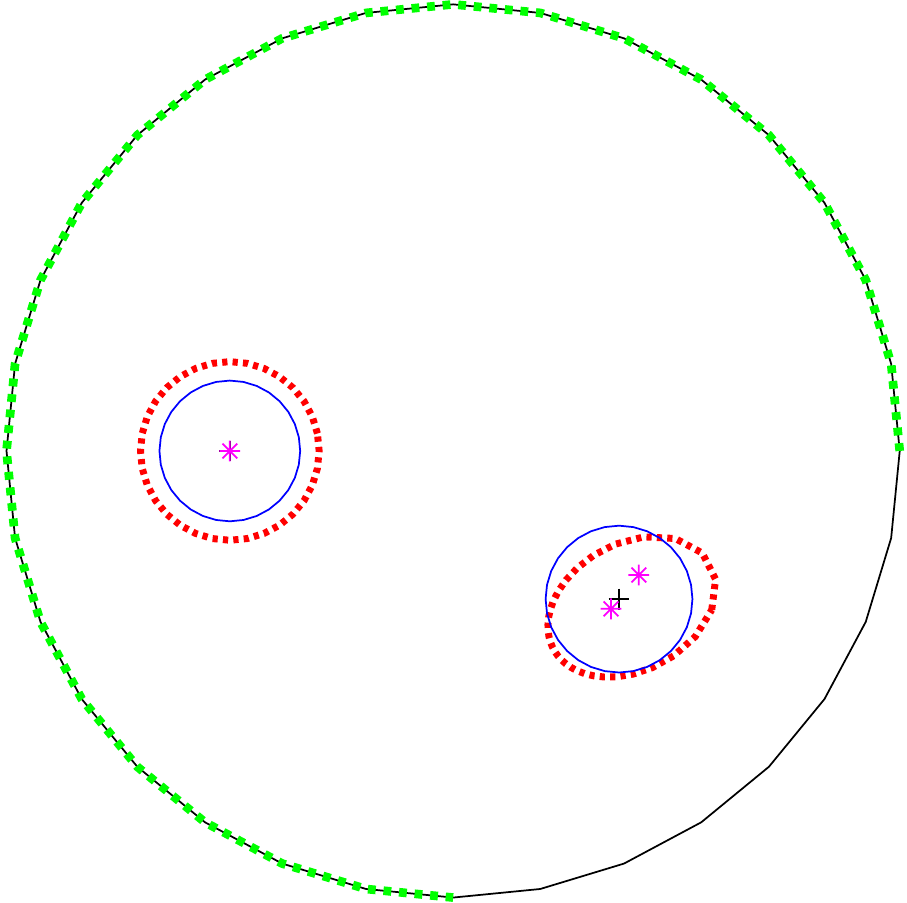}
\includegraphics[width=0.19\textwidth]{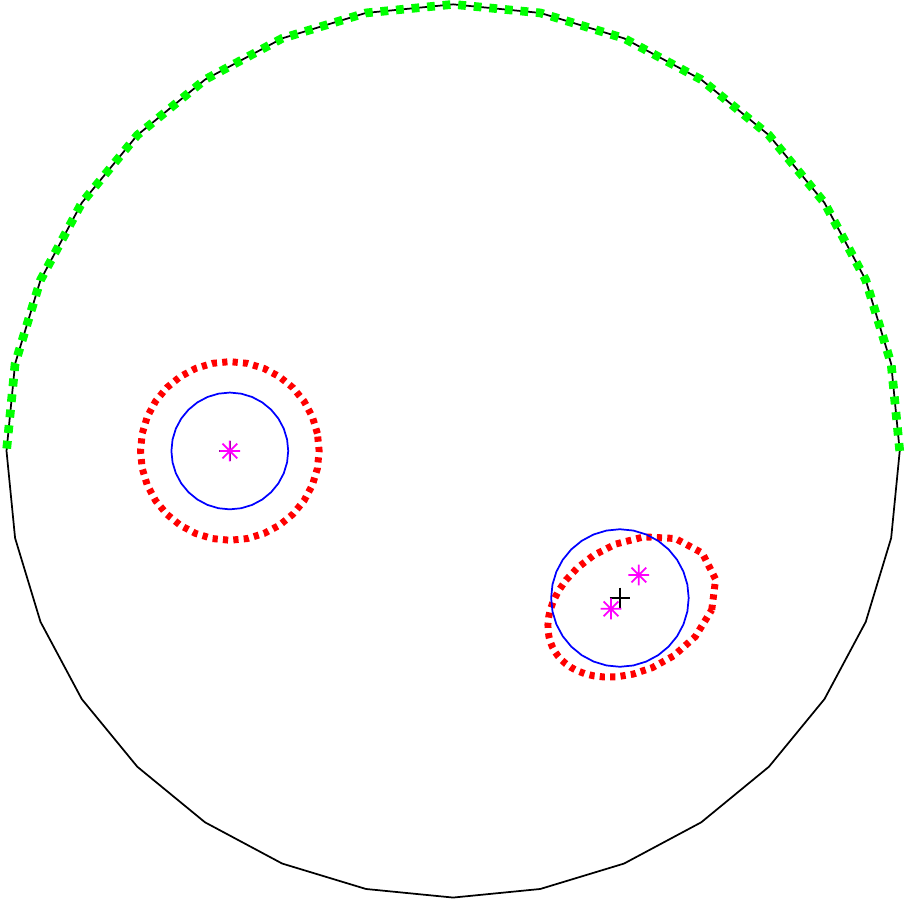}
\includegraphics[width=0.19\textwidth]{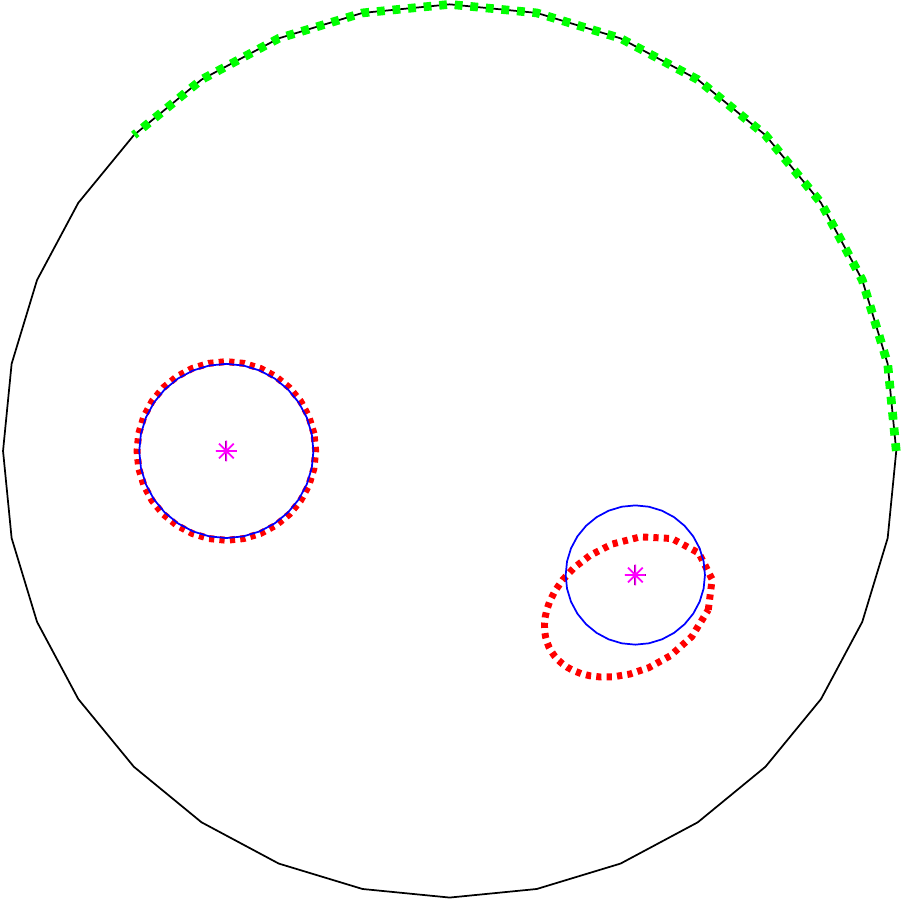}
\includegraphics[width=0.19\textwidth]{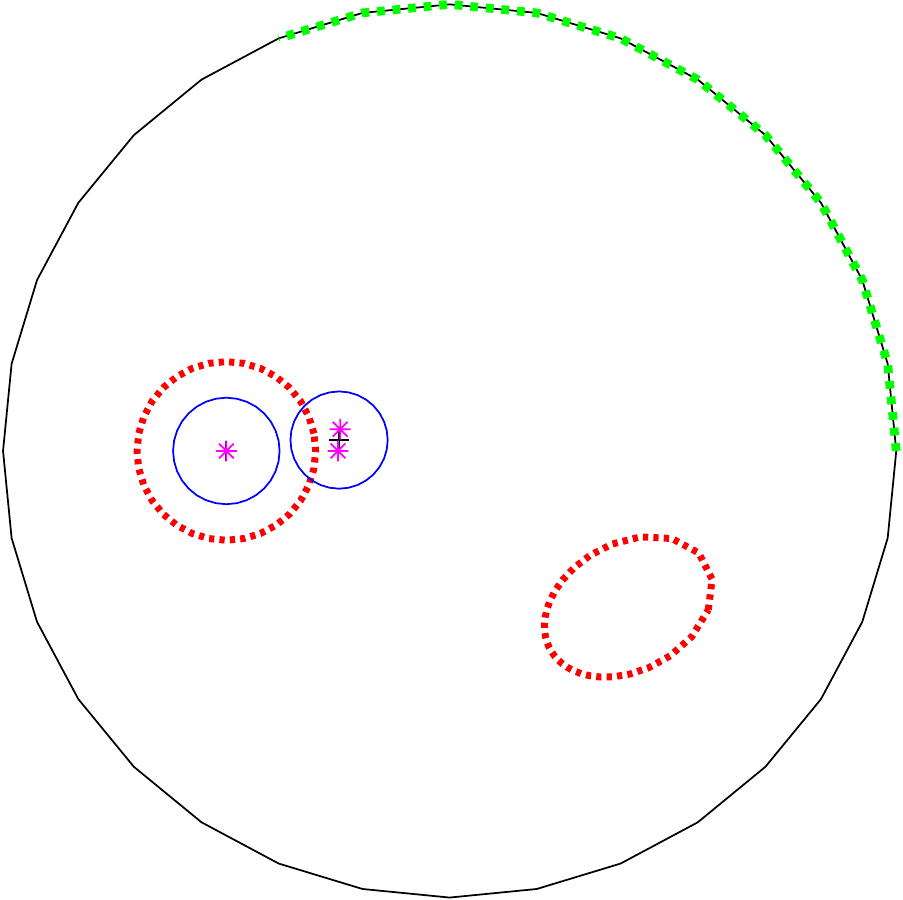}
\\[2ex]
\includegraphics[width=0.19\textwidth]{polpl_2obj_Tmeas1_Newton10.eps}
\includegraphics[width=0.19\textwidth]{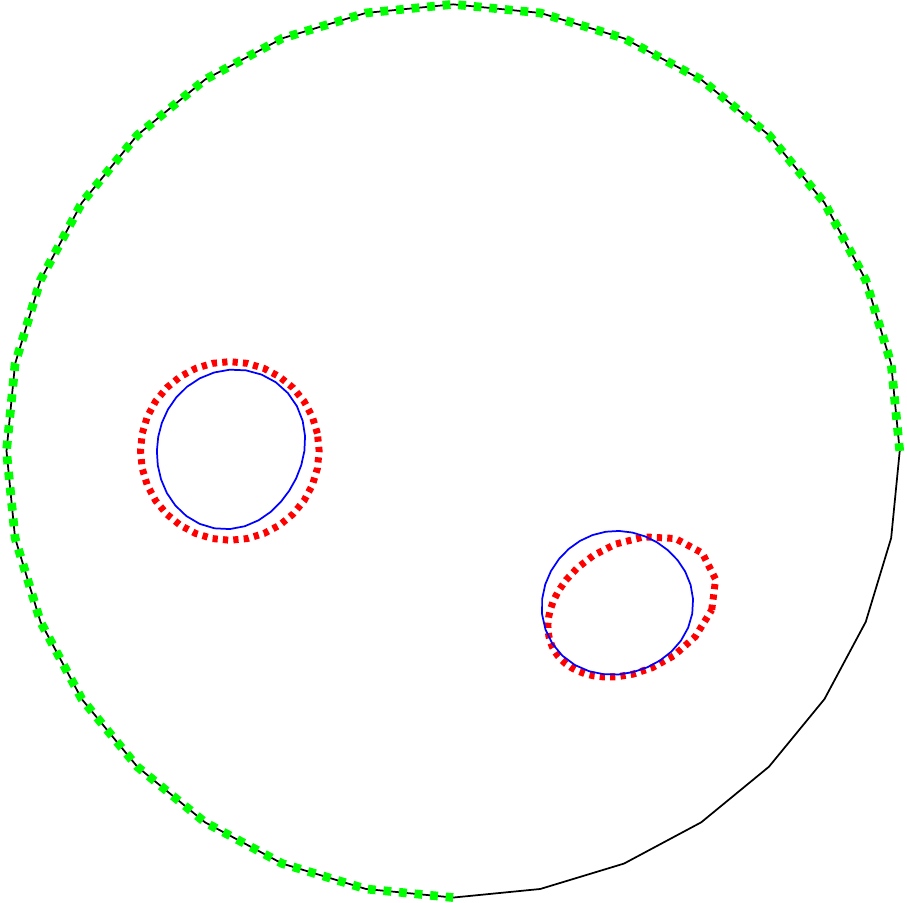}
\includegraphics[width=0.19\textwidth]{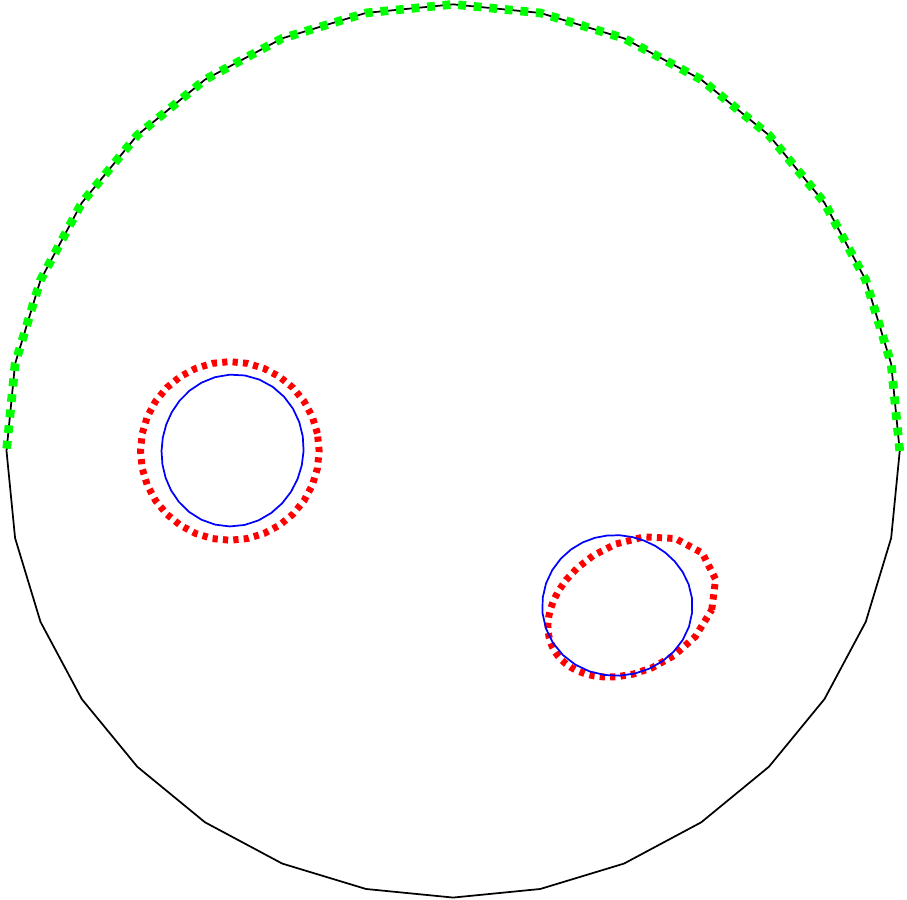}
\includegraphics[width=0.19\textwidth]{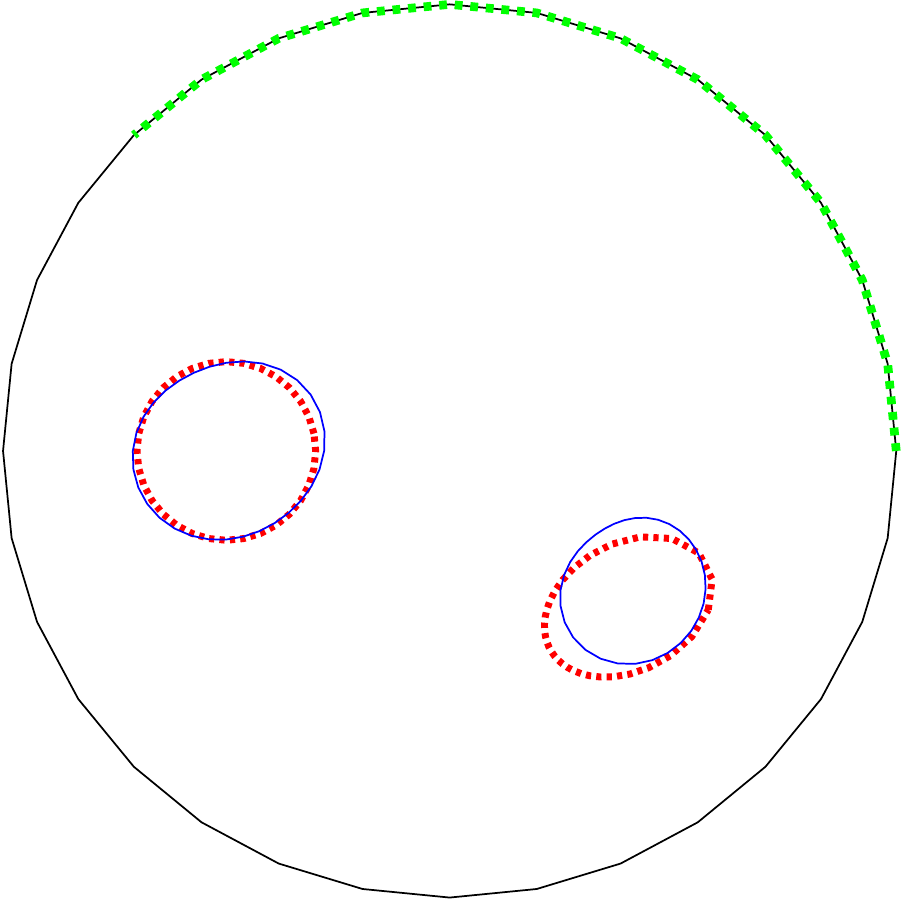}
\includegraphics[width=0.19\textwidth]{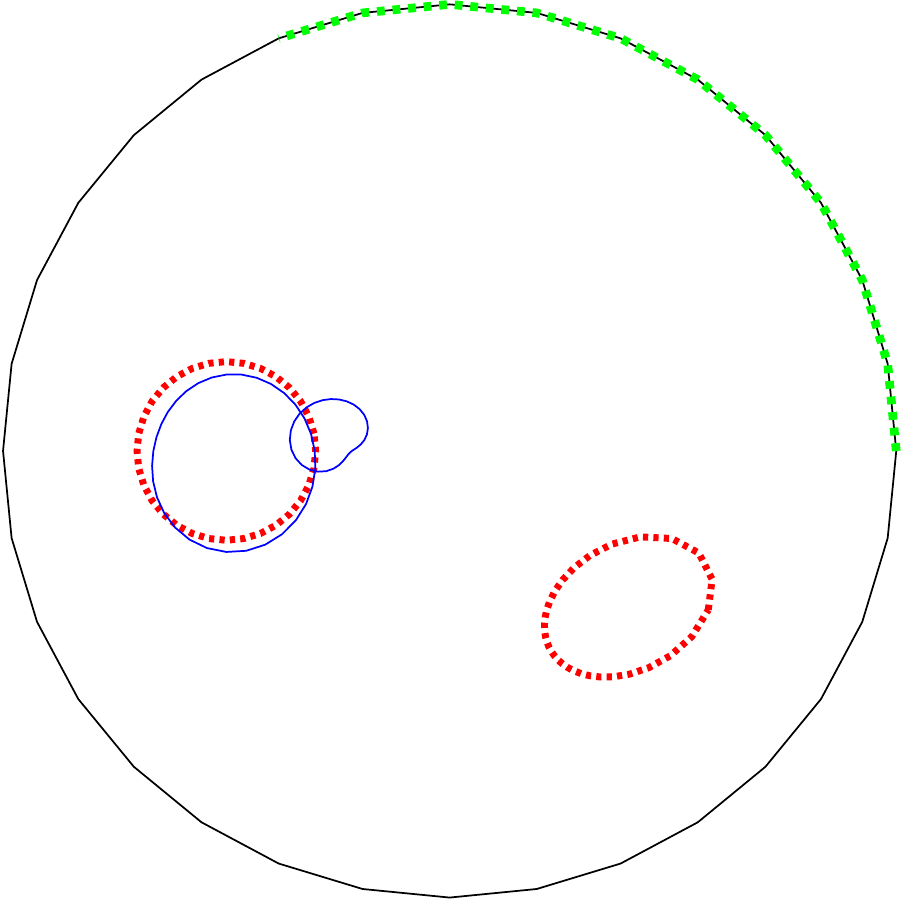}
\\[2ex]
(a) $\frac{\alpha}{2\pi}=1$ \hspace*{0.05\textwidth}
(b) $\frac{\alpha}{2\pi}=0.75$ \hspace*{0.05\textwidth}
(c) $\frac{\alpha}{2\pi}=0.5$ \hspace*{0.05\textwidth}
(d) $\frac{\alpha}{2\pi}=0.4$ \hspace*{0.05\textwidth}
(e) $\frac{\alpha}{2\pi}=0.3$ \hspace*{-0.01\textwidth}
\caption{Reconstruction of two inclusions from partial data;
top row: equivalent point sources and disks; bottom row: boundary curves from Newton's method
\label{fig:2obj_partial}}
\end{center}
\end{figure}

\paragraph{Reconstructions from partial data:} 

In Figures~\ref{fig:3obj_partial}, \ref{fig:2obj_partial} we show reconstructions from partial data.
The quality appears to decrease only slightly with decreasing amount of data, until at a certain point (between 30 and 40 per cent of the full angle) the algorithm partially breaks down and fails to find one of the objects completely.
The ability of an inclusion to stay reconstructible from a low amount of data is related to its weight according to the associated weight $\lambda_k$ according to \eqref{meanvalue_Helmholtz} (using the object's average radius). 
In Figures~\ref{fig:3obj_partial} and \ref{fig:2obj_partial} these weights are: 0.0725 for the circle, 0.0692 for the cardiod and 0.0515 for the ellipse.
Also, the position relative to the measurement boundary clearly plays a role.

It may seem that simple completion
of data from the measurement subarc to the entire boundary should give
similar results by for example using the Fourier series expansion.
However, this analytic continuation step comes at a price.
If we have $N$ Fourier modes over an arc of length $\alpha$
then this analytic continuation results from solving a system with a matrix $P(N,\alpha)$
the conditioning of which can be computed analytically.
Of course the condition number will increase with both $N$ and decreasing
values of $\alpha$, $0<\alpha<2\pi$.
In fact this is a well-understood problem, see \cite{Slepian:1983} where it has been shown that the condition number of $P(N,\alpha)$ is asymptotic (for large $N$) to
\begin{equation}\label{eqn:condSlepian}
c_N\sim e^{\gamma(\alpha) N}\textup{ where } 
\gamma(\alpha) = \log\Bigl(\frac{\sqrt{2}+\sqrt{1+\cos\alpha}}{\sqrt{2}-\sqrt{1+\cos\alpha}}\Bigr).
\end{equation}
This has been used in several inverse problems, see, e.g., \cite{HR:1997,Louis:1986}.

However, in our situation the reconstructions are performing much better than
the above pessismistic estimate would suggest.
% in our case, with M=4 we have N=9 and alpha corrsponds to Tmeas/T
%\todo{runs of force-h-imp-pts-partialdata; plot condJmat and cnd-Slepian for different values of Tmeas/T between 0 and 1 for a single object}
%The results are much better than to be expected from classical results about extending limited data on $\Sigma\subset\partial\Omega$ to the rest of the boundary \cite{Slepian:1983}.
This is due to the fact that our reconstruction does not rely on extending the boundary data but rather on  directly applying our method to the restricted flux $g=\partial_\nu \hat{p}\vert_\Sigma$. The additional information that the PDE model provides clearly contributes to this inprovement, which is also reflected in the condition number of the Jacobian in Newton's method versus the theoretical prediction for data completion from \cite{Slepian:1983}. This can be seen in Table~\ref{tab:conds}. 
%The underlying number of basis functions is $N=9$.
\begin{table}[htbp]
\begin{center}
\begin{tabular}{|c|c|c|}\hline
$\frac{\alpha}{2\pi}$ & cond(J) & $c_N$ \cite{Slepian:1983}\\ \hline
0.75 & 29.6 & 2.8e+2\\
0.5 & 64.9 & 2.3e+5\\
0.4& 73.7 & 1.8e+07\\
0.3& 1733.8& 2.6e+08\\ \hline
\end{tabular}
\end{center}
\caption{Condition numbers of Jacobian in Newton's method for a single inclusion using $9$ basis functions versus condition number formula \eqref{eqn:condSlepian} for data completion with $N=9$
\label{tab:conds}}
\end{table}
%fraction-angle =   0.7500:
%condJmat =  29.5862
%cndSlepian =  282.8159

%fraction-angle =   0.5000:
%condJmat =  64.8617
%cndSlepian =  2.2849e+05

%fraction-angle =   0.3750:
%condJmat =  73.7328
%cndSlepian =  1.7900e+07

%fraction-angle =   0.3125:
%condJmat =  1.7338e+03
%cndSlepian =  2.6122e+08

\bigskip

\begin{figure}[htbp]
\begin{center}
\includegraphics[width=0.19\textwidth]{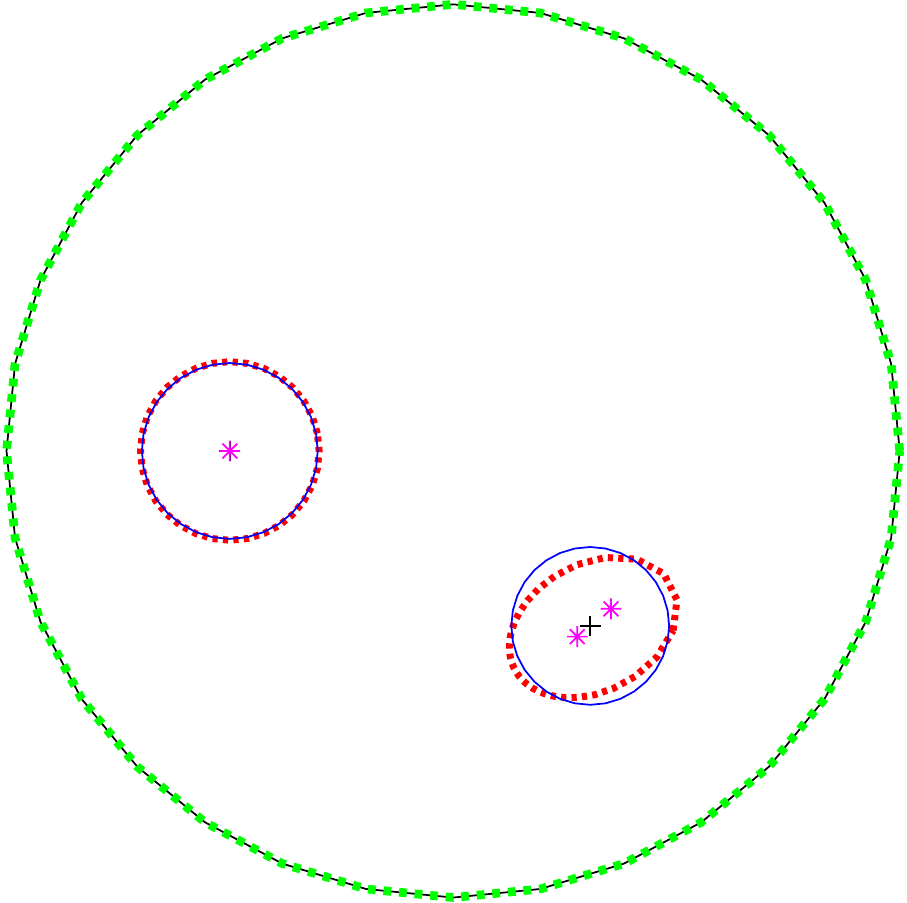}
\includegraphics[width=0.19\textwidth]{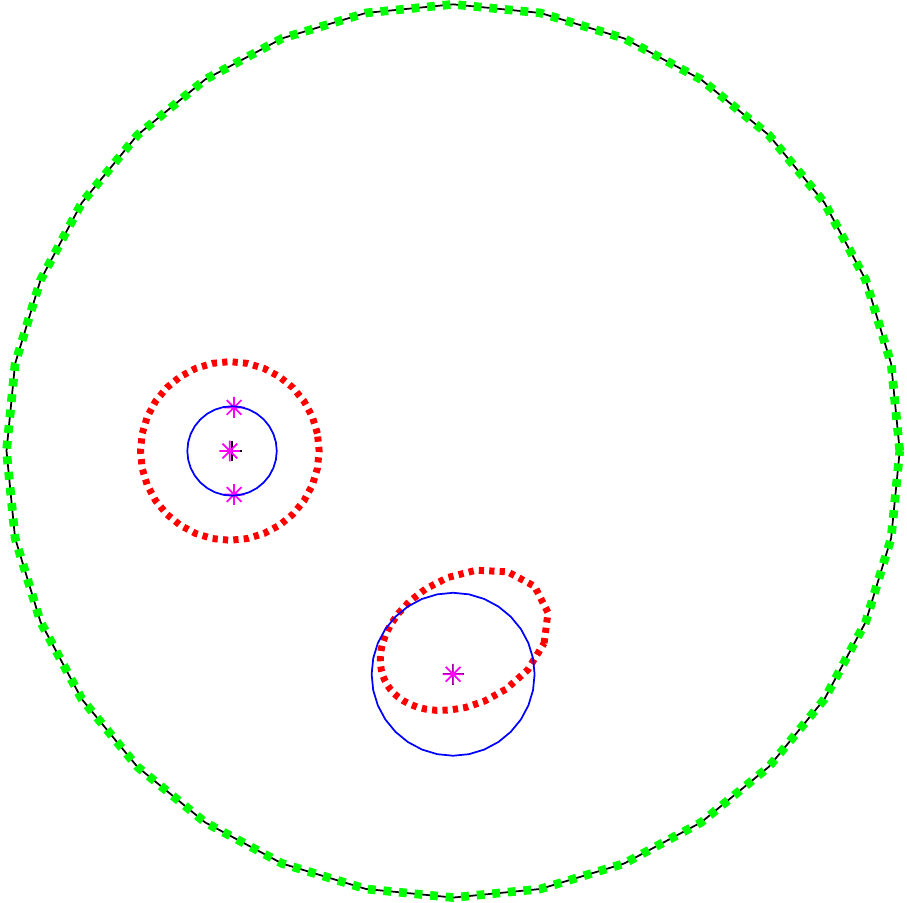}
\includegraphics[width=0.19\textwidth]{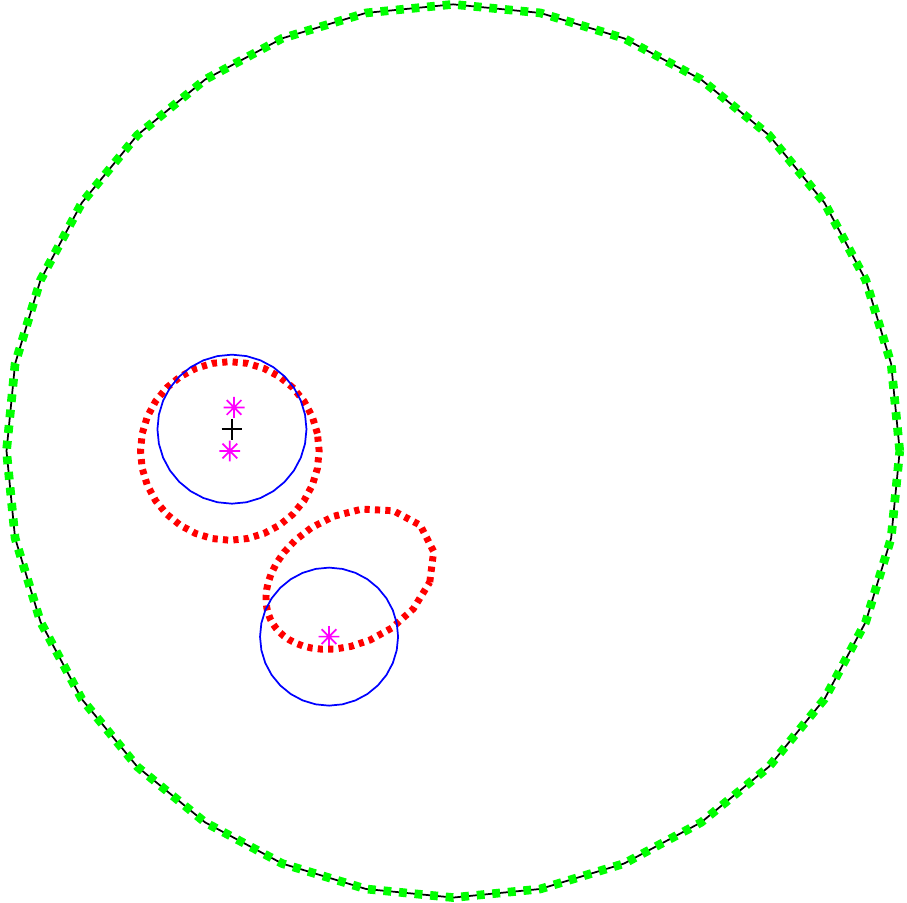}
\includegraphics[width=0.19\textwidth]{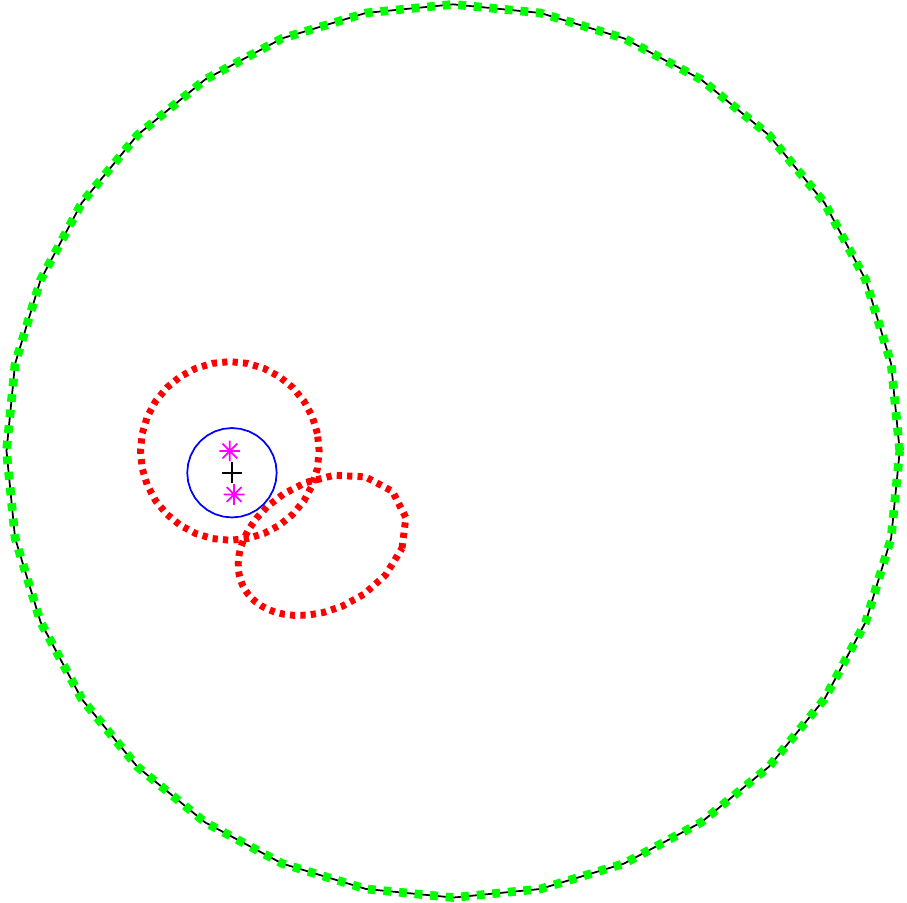}
\\[2ex]
\includegraphics[width=0.19\textwidth]{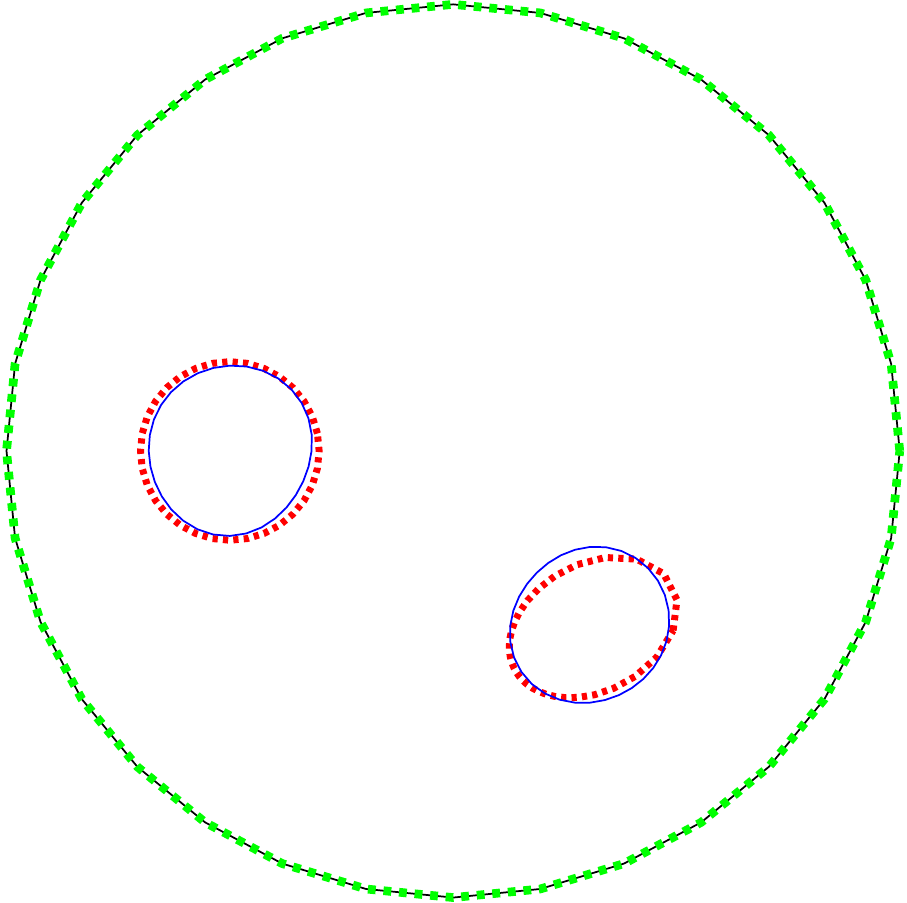}
\includegraphics[width=0.19\textwidth]{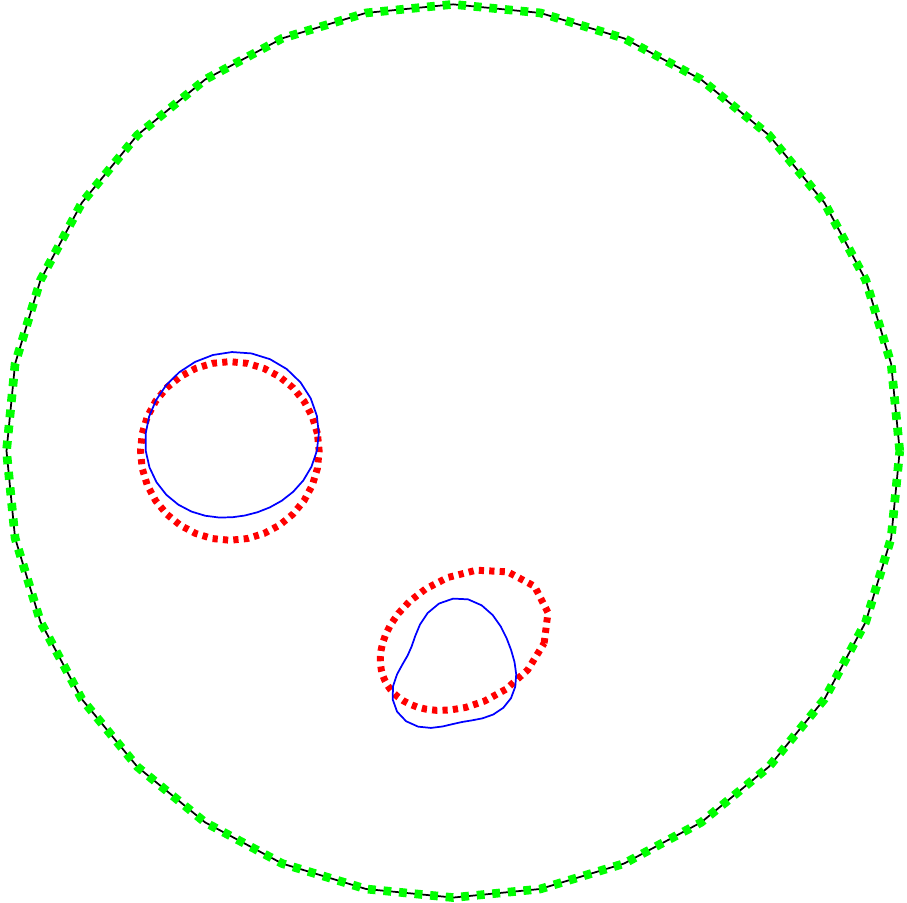}
\includegraphics[width=0.19\textwidth]{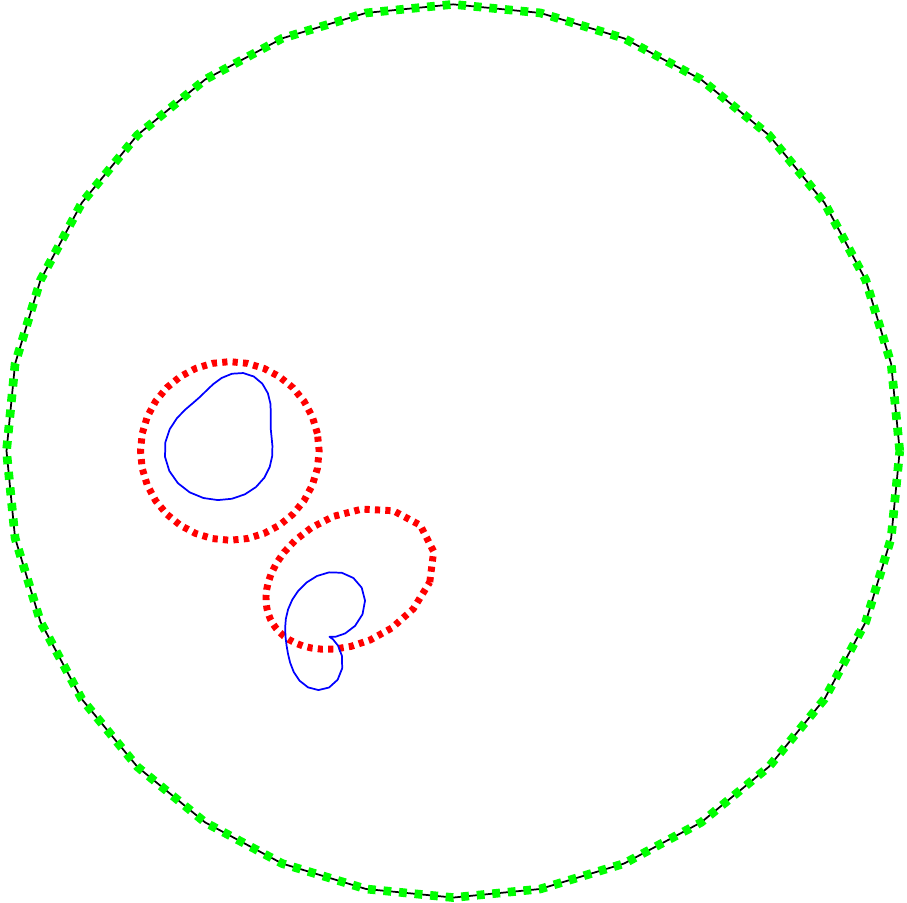}
\includegraphics[width=0.19\textwidth]{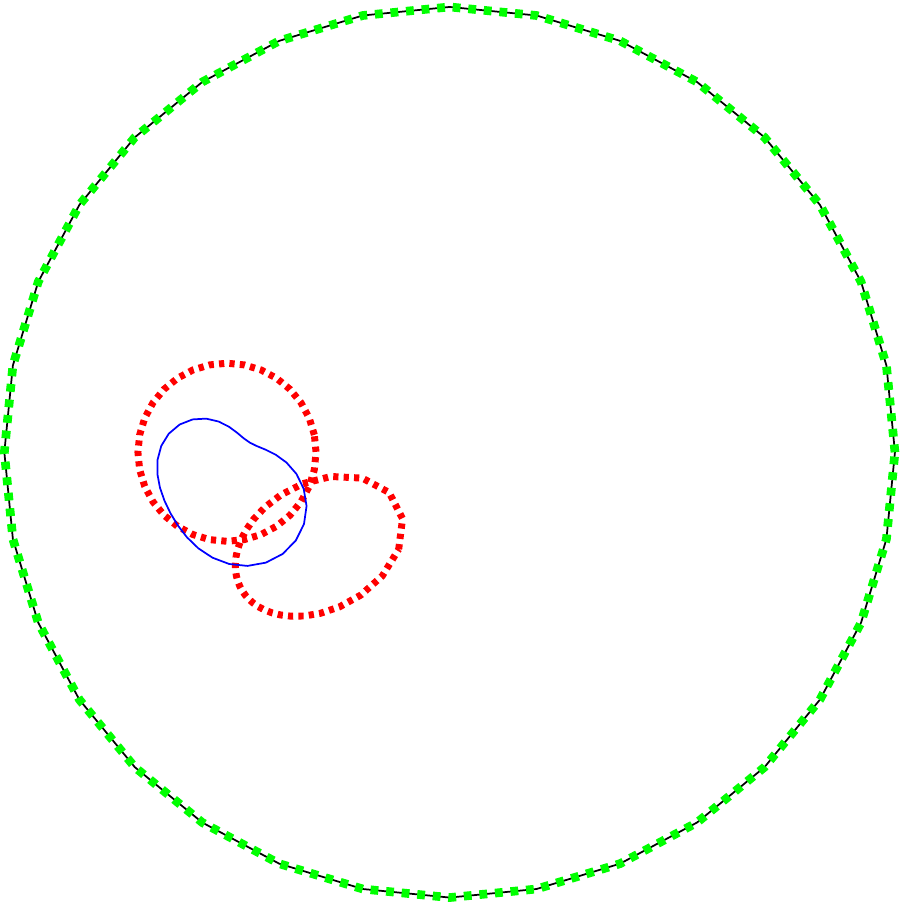}
\\[2ex]
(a) $\frac{\theta}{2\pi}=0.3$ \hspace*{0.05\textwidth}
(b) $\frac{\theta}{2\pi}=0.2$ \hspace*{0.05\textwidth}
(c) $\frac{\theta}{2\pi}=0.1$ \hspace*{0.05\textwidth}
(d) $\frac{\theta}{2\pi}=0.09$ \hspace*{-0.01\textwidth}
\caption{Reconstruction of two inclusions at different distances;
top row: equivalent point sources and disks; bottom row: boundary curves from Newton's method
\label{fig:2obj_dist}}
\end{center}
\end{figure}

\paragraph{Varying distance between objects:} 
Figure~\ref{fig:2obj_dist} shows reconstructions of two inclusions at several distance, given by the difference $\theta$ in the phase of the centroid (in polar coordinates).
%The actual weights are 0.0725 for the left and 0.0515 for the right object.
%The relative error after application of Newton's method at $\kaptil=10$ was (a) 0.1430 (b) 0.1385 (c) 0.2247 (d) (for the single found object only) 0.1896.
The given data appears to allow distinction of objects very well, as long as they do not overlap. However, decreasing distance between them compromises the quality of reconstructions. 

\bigskip

\begin{figure}[htbp]
\begin{center}
\includegraphics[width=0.19\textwidth]{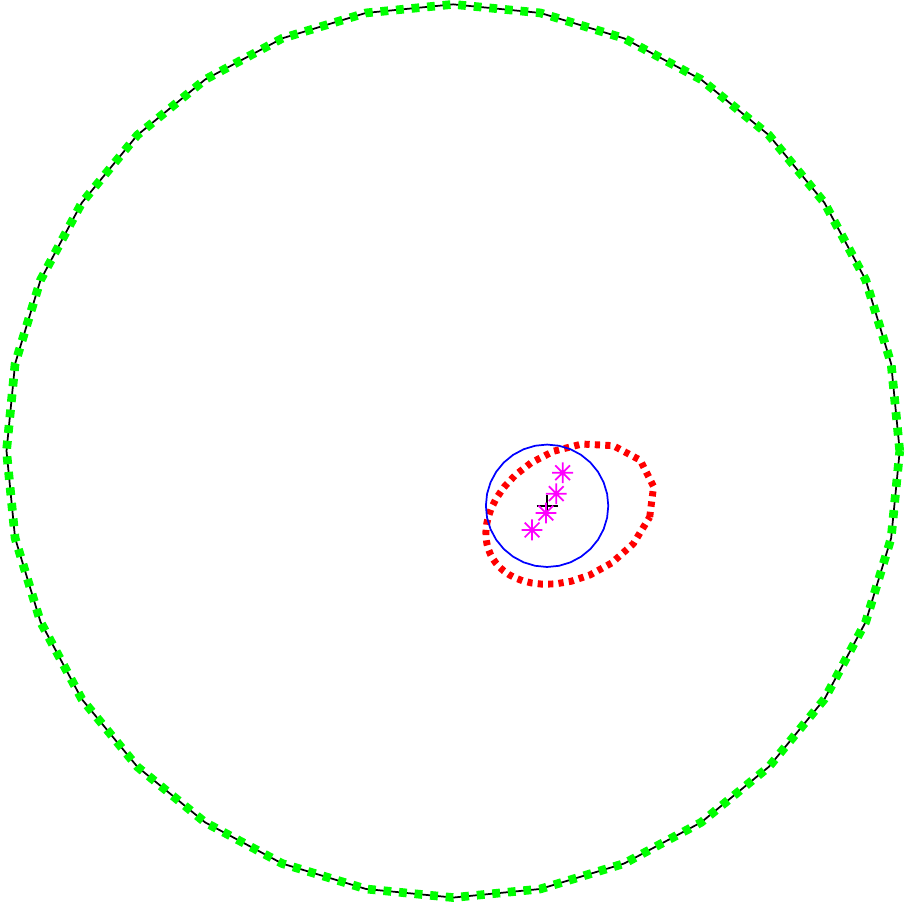}
\includegraphics[width=0.19\textwidth]{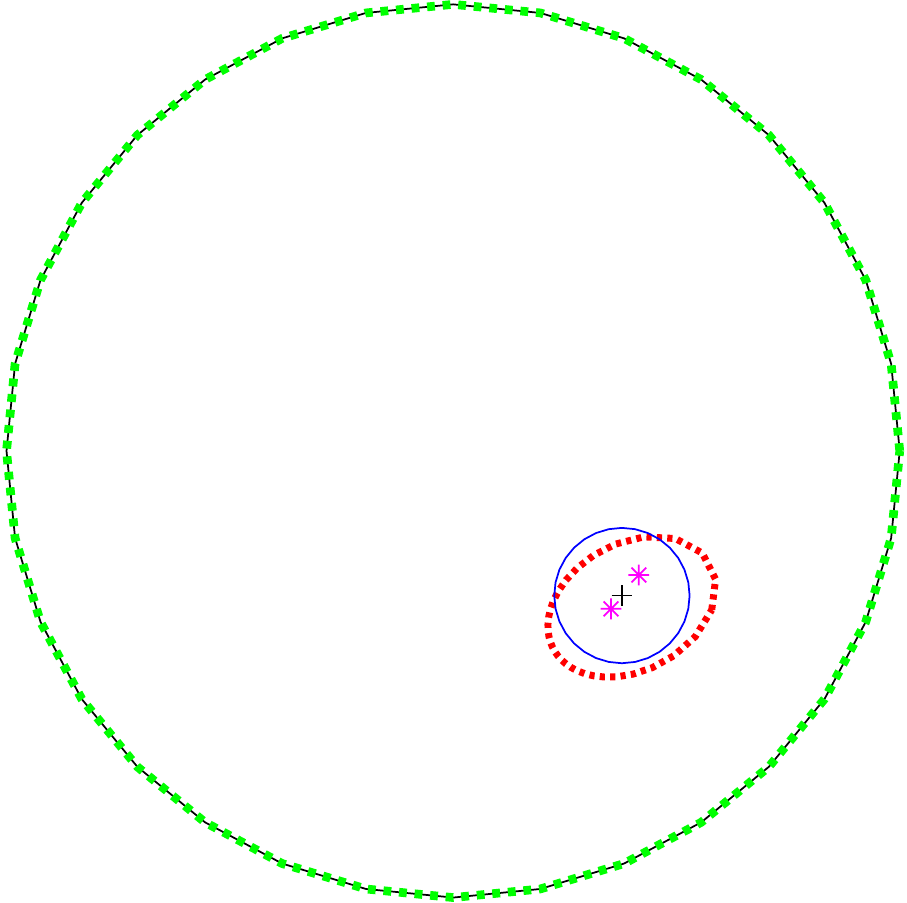}
\includegraphics[width=0.19\textwidth]{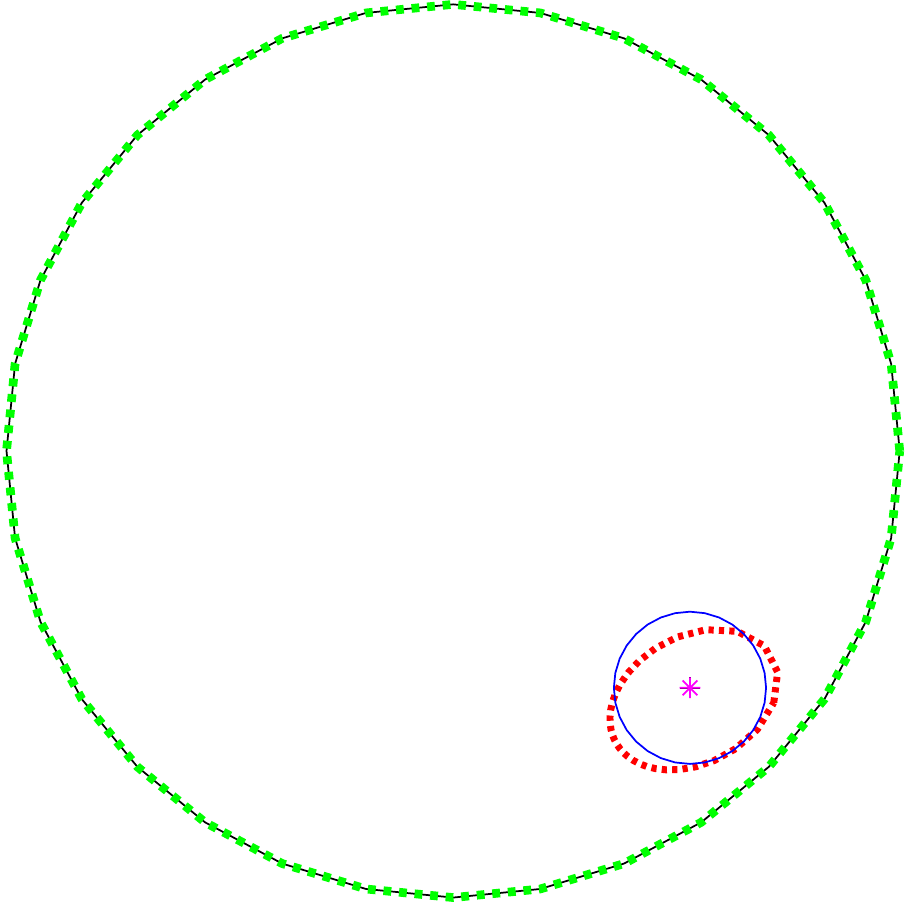}
\\[2ex]
\includegraphics[width=0.19\textwidth]{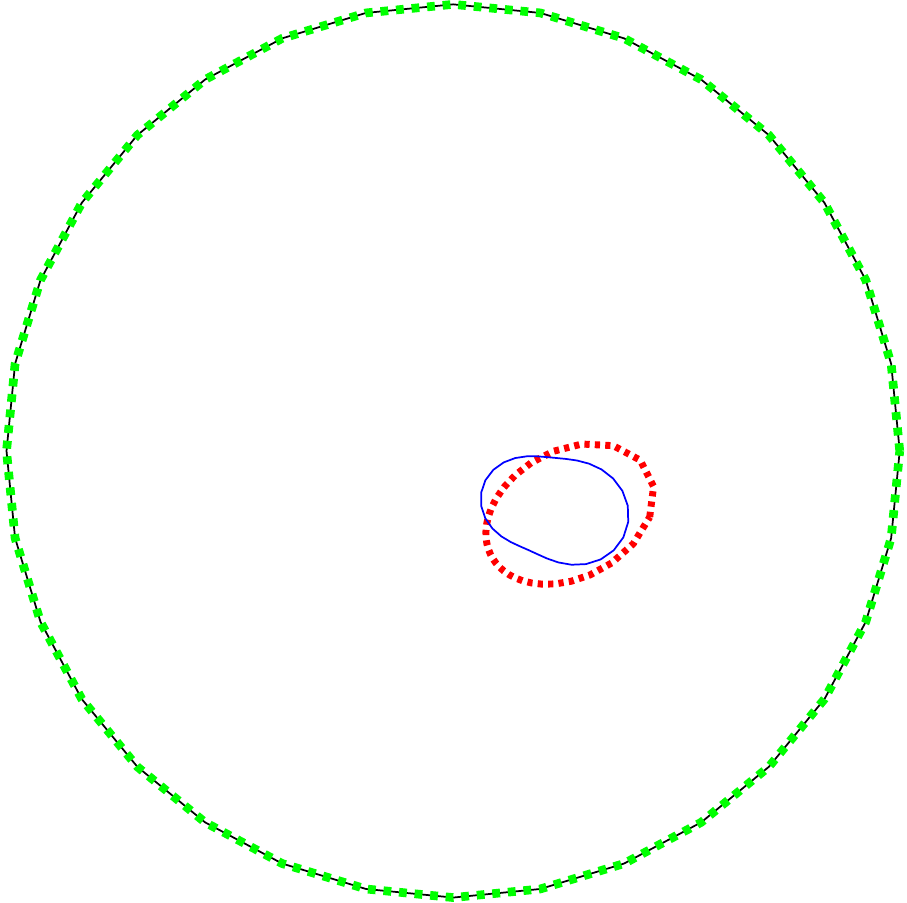}
\includegraphics[width=0.19\textwidth]{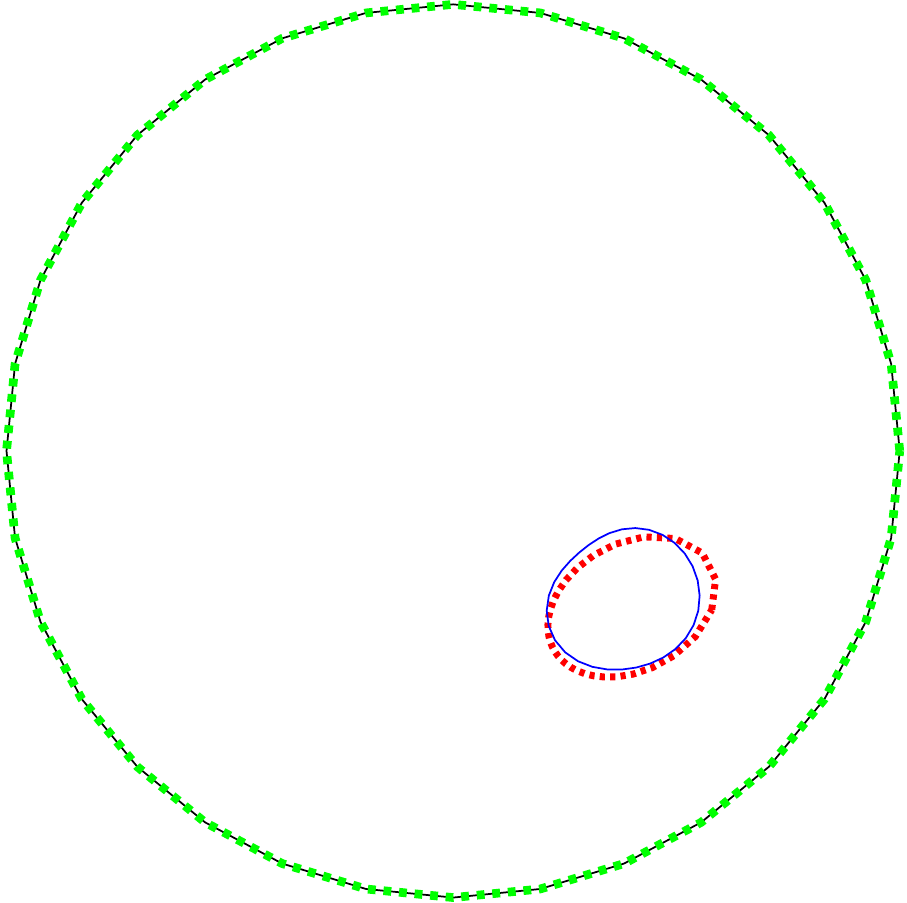}
\includegraphics[width=0.19\textwidth]{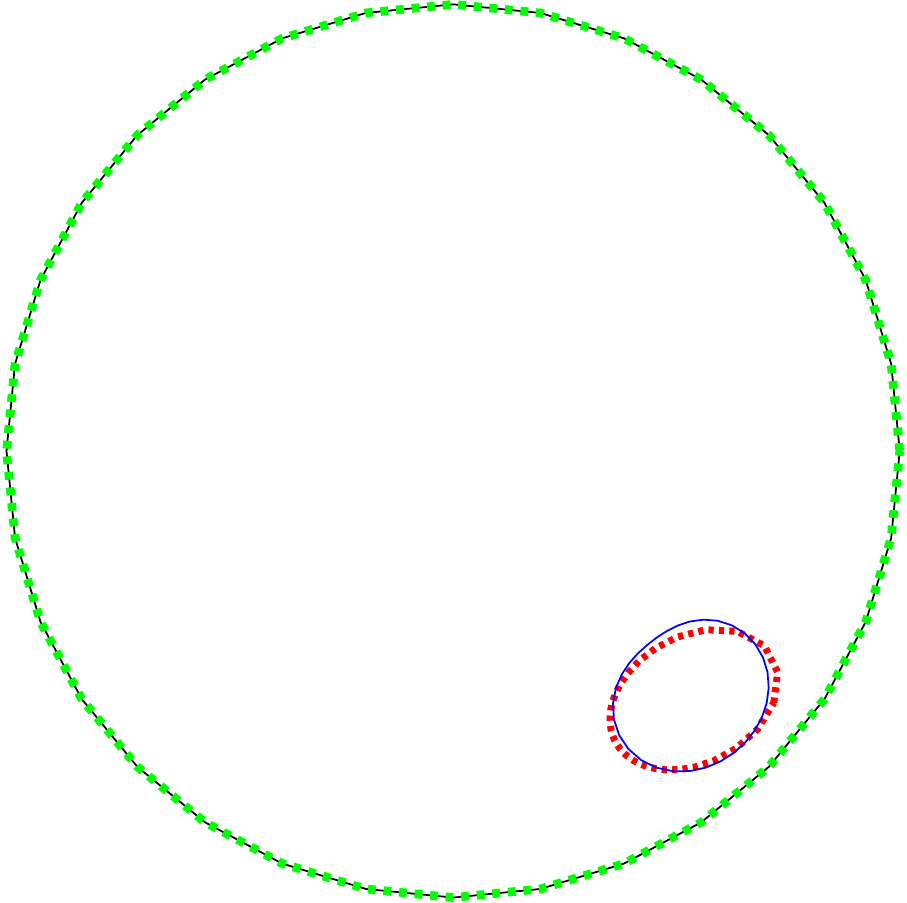}
\\[2ex]
(a) \hspace*{0.15\textwidth}
(b) \hspace*{0.15\textwidth}
(c) 
\caption{Reconstruction of one inclusion at different distances from the boundary;
top row: equivalent point sources and disks; bottom row: boundary curves from Newton's method
\label{fig:2obj_dist}}
\end{center}
\end{figure}
\paragraph{Varying distance to boundary:} 
Figure~\ref{fig:2obj_dist} shows reconstructions of one inclusion at several distances from the boundary. The relative error after application of Newton's method at $\kaptil=10$ was (a) 0.2963 (b) 0.1931 (c) 0.1434.
Also visually, it is obvious that closeness to the observation surface significantly improves the reconstruction quality.

\begin{figure}[htbp]
\begin{center}
\includegraphics[width=0.19\textwidth]{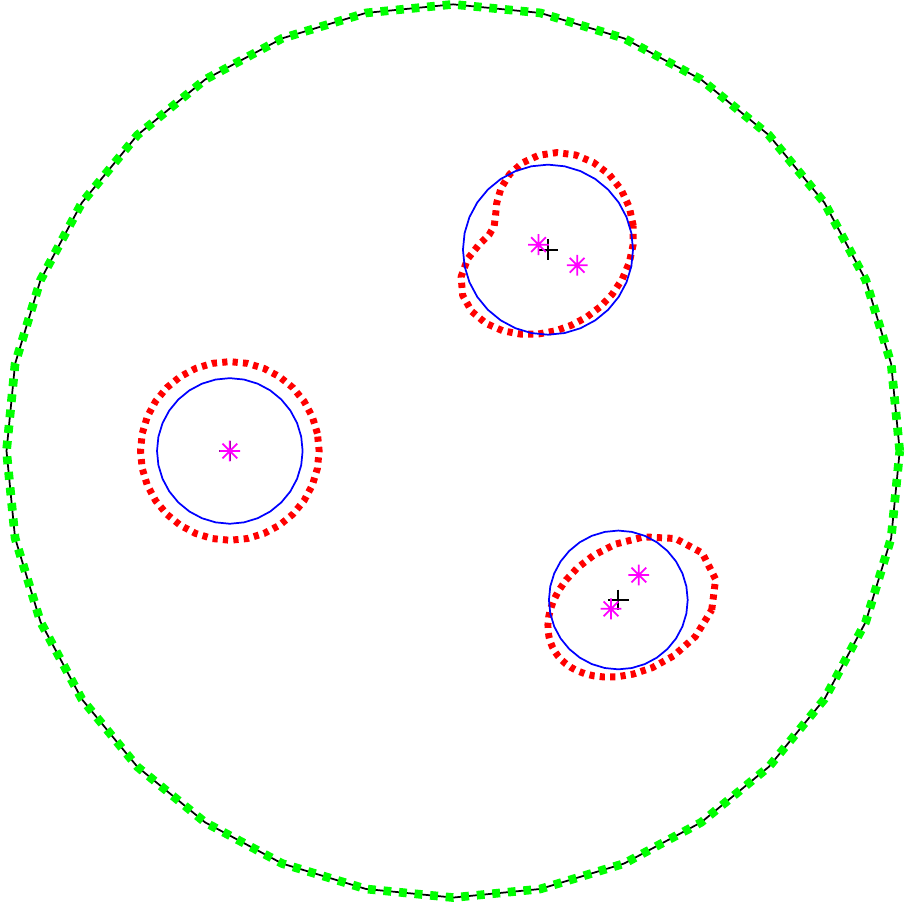}
\includegraphics[width=0.19\textwidth]{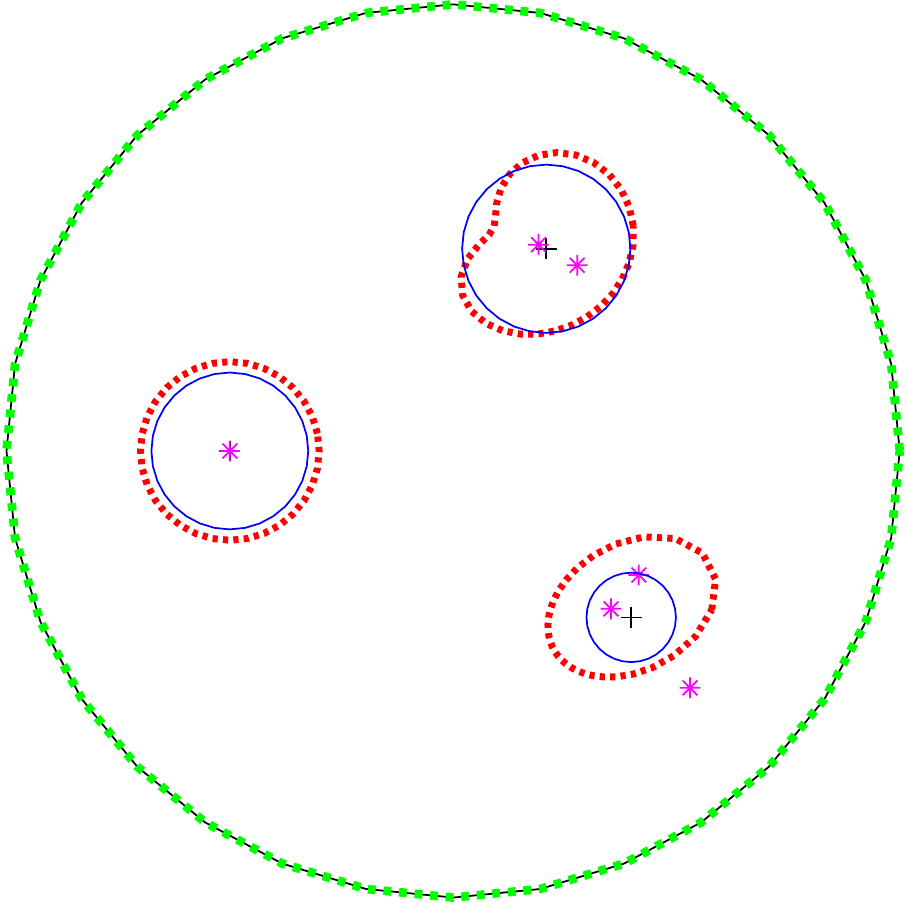}
\includegraphics[width=0.19\textwidth]{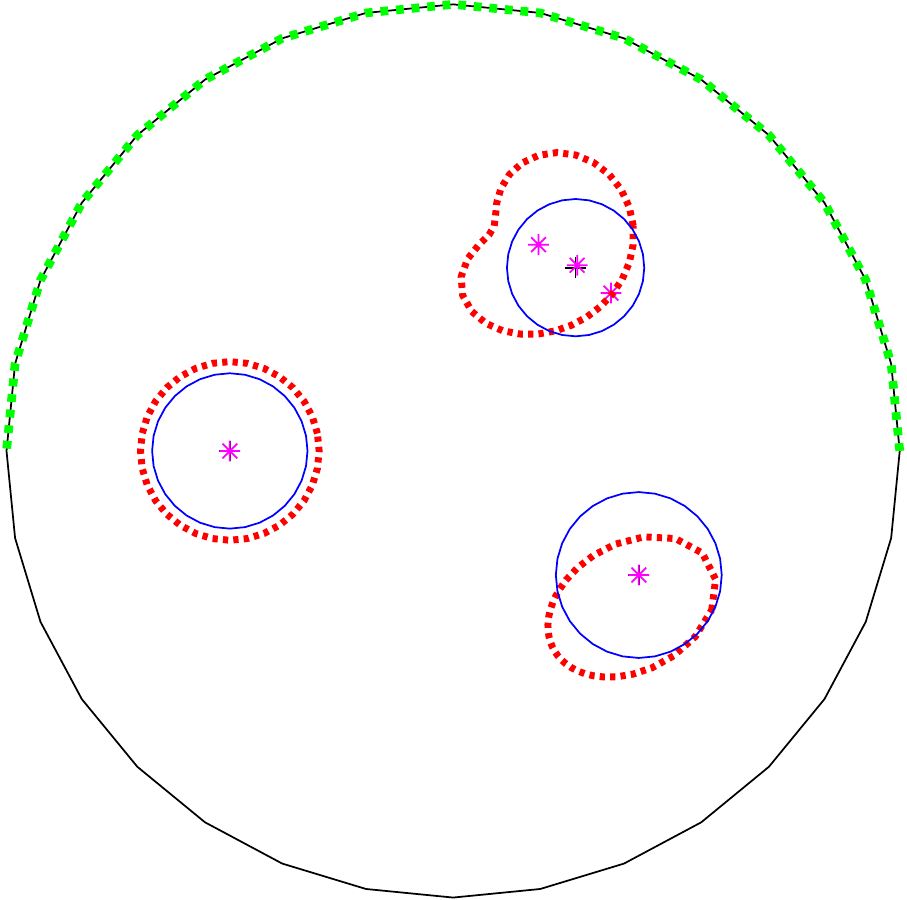}
\includegraphics[width=0.19\textwidth]{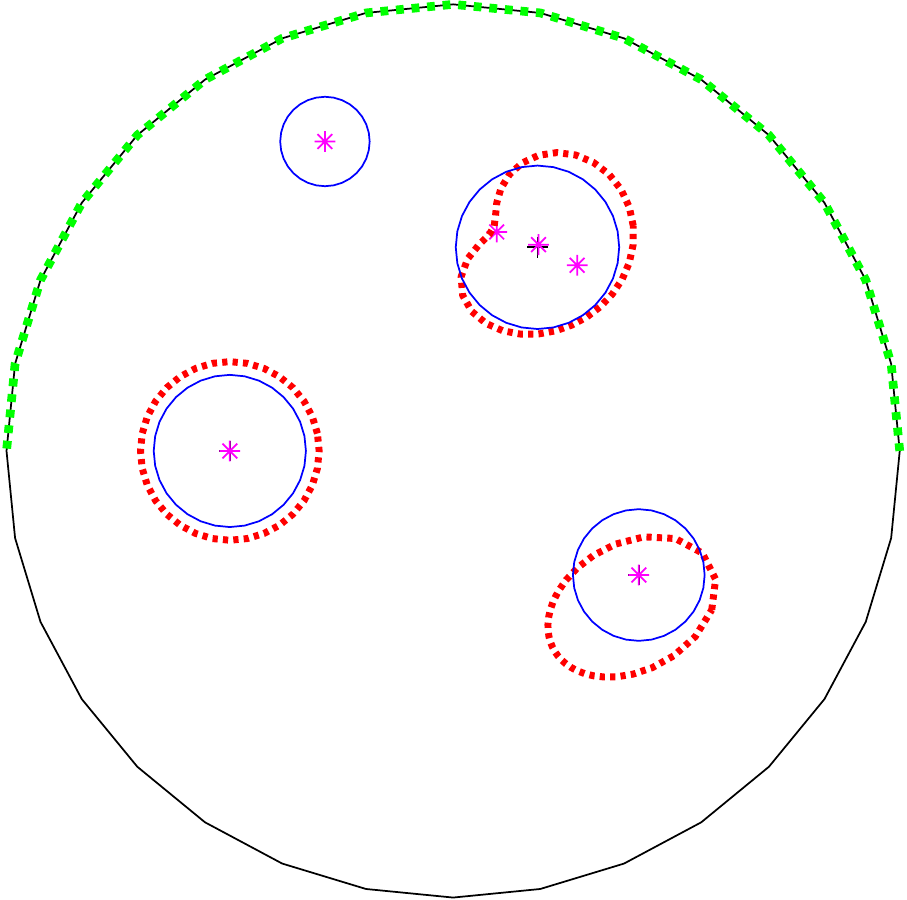}
\\[2ex]
\includegraphics[width=0.19\textwidth]{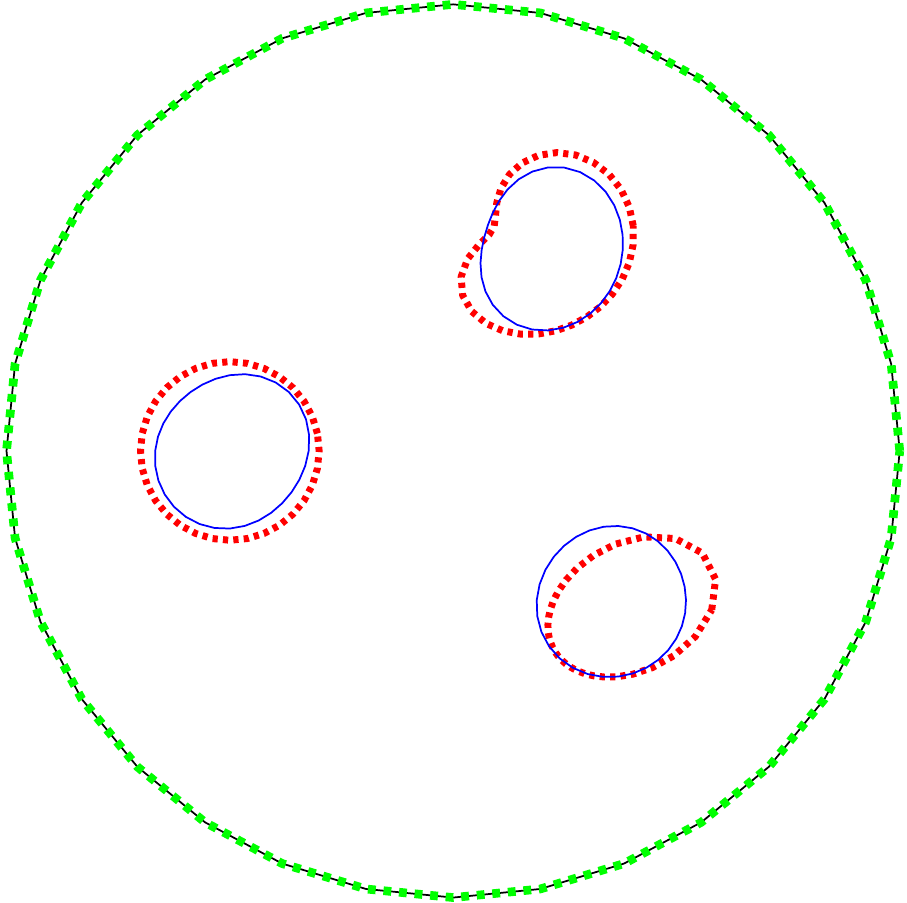}
\includegraphics[width=0.19\textwidth]{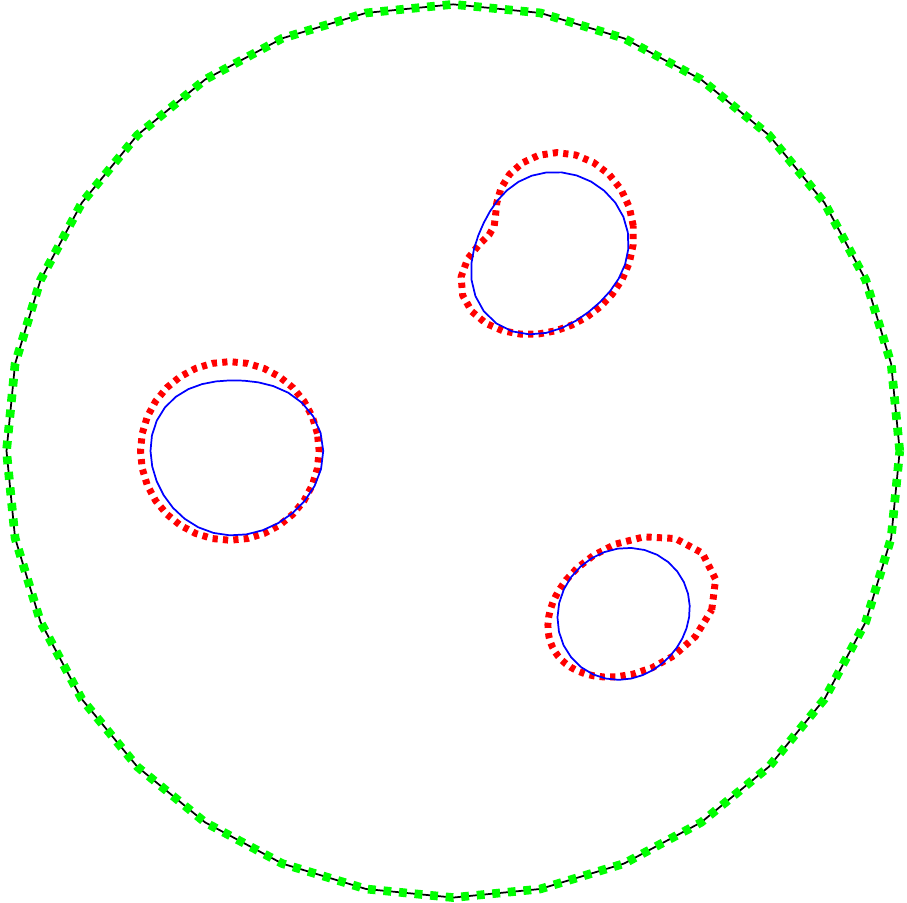}
\includegraphics[width=0.19\textwidth]{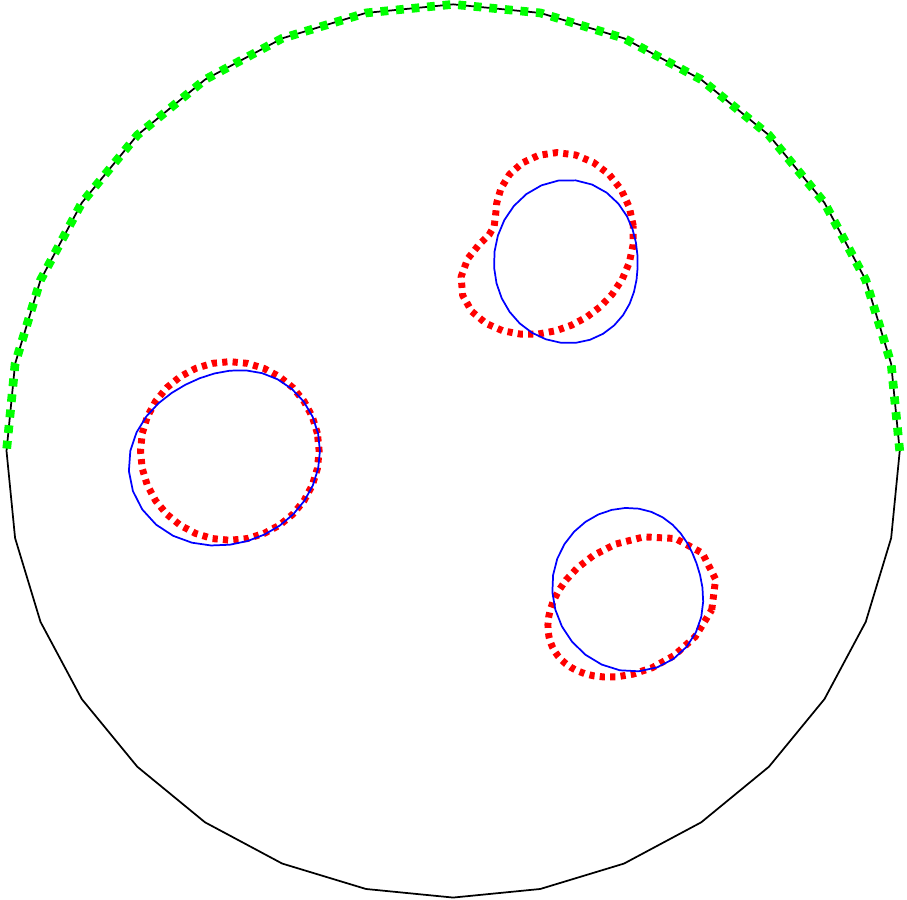}
\hspace*{0.19\textwidth}
\\[2ex]
(a) $\delta=2$ \% \hspace*{0.05\textwidth}
(b) $\delta=3$ \% \hspace*{0.05\textwidth}
(c) $\delta=2$ \% \hspace*{0.05\textwidth}
(d) $\delta=3$ \% \hspace*{-0.01\textwidth}\\
\hspace*{0.42\textwidth}
$\frac{\alpha}{2\pi}=0.5$ \hspace*{0.09\textwidth}
$\frac{\alpha}{2\pi}=0.5$\\
\caption{Reconstruction of three inclusions from noisy data;
top row: equivalent point sources and disks; bottom row: boundary curves from Newton's method
\label{fig:3obj_noise}}
\end{center}
\end{figure}
\paragraph{Reconstruction from noisy data:} 
Finally we study the impact of noise in the measurements on the reconstruction quality, see Figure~\ref{fig:3obj_noise} for the case of three objects. Regularisation is mainly achieved by the sparsity prior incorporated via the PDAP point source identification and this actually makes the process very stable with respect to perturbations in the measurements up to noise levels of about three per cent. 
Using partial data clearly impacts this robustness and thus only works with noise levels of two per cent or less.
 
%\begin{itemize}
%\item two inclusions from full data with third harmoinc
%\item three inclusions from full data with third harmoinc
%\item two inclusions from partial data with third harmoinc
%\item \todo{one object from partial data and plotting condition numbers}
%\item \todo{vary distance between objects and try different frequencies}
%\item \todo{vary size of (single) object} NO
%\item \todo{vary distance of (single) object from observation boundary}
%\end{itemize}

\section{Convergence of Newton's method}
Similarly to the time domain setting, \cite{BB15} one can prove that the all-at-once formulation of this inverse problem (even with arbitrary $M\in\mathbb{N}\cup\{\infty\}$) satisfies a range invariance condition, which, together with a linearised uniqueness result, enables to prove convergence of a regularised frozen Newton method. 

We write the inverse problem of reconstructing $\eta$ in \eqref{multiharmonic_nonlinear} as 
a nonlinear operator equation
\begin{equation}\label{eqn:Fm0}
\begin{aligned}
&G_m(\eta,\hat{p}) = h_m \quad m\in\{1,\ldots,M\}	\text{ with } \hat{p}=(\hat{p}_1,\ldots,\hat{p}_M) \\
&C_m \hat{p}_m = y_m  \quad m\in\{1,\ldots,M\}
\end{aligned}
\end{equation}
for the model operators $G_m:Q\times V^M\to W$ (including the case $M=\infty$ with $\ell^2(\mathbb{N};V)$ in place of $V^M$), $h_1=\hat{h}$, $h_m=0$ for $m\geq2$ and the observation operators $C_m\in L(V,Y)$. 
Here $Q$, $V$, $Y$ are the parameter, state, and data spaces.

The components $G_m$ of the model part of the forward operator have the particular structure
\begin{equation}\label{eqn:FDB}
G_m(\eta,\hat{p}) = D_m \hat{p}_m + B_m(\hat{p}) \eta
\end{equation}
with $D_m\in L(V,W)$ and $B_m(\hat{p})\in L(Q, W)$ linear for each $\hat{p}\in V^M$ but depending nonlinearly on $\hat{p}$.
(This is different from \cite{rangeinvar}, where we considered a sum of linear operators $B_m(\hat{p})$ in a single model equation rather than a system of model equations.) 
More concretely, in our setting with the operators defined by 
\begin{equation}\label{eqn:calADM}
\begin{aligned}
&\mathcal{A}u = \Bigl(v\mapsto \int_\Omega\nabla u\cdot\nabla v\, dx +\gamma\int_{\partial\Omega} u\, v\, ds\Bigr)\\
&\mathcal{D}u = \Bigl(v\mapsto b\int_\Omega\nabla u\cdot\nabla v\, dx +(c^2\beta+b\gamma)\int_{\partial\Omega} u\, v\, ds\Bigr)\,, \\
&\mathcal{M}u = \Bigl(v\mapsto \int_\Omega u\,v\, dx +\beta b\int_{\partial\Omega} u\, v\, ds\Bigr)
\end{aligned}
\end{equation}
%related to the negative Laplacian with impedance boundary conditions.
we take 
\begin{equation}\label{eqn:DBQVW}
\begin{aligned}
&D_m  = 
%\triangle + m^2\kappa^2 \text{ with  homogeneous impedance boundary conditions} ,
-m^2\omega^2\mathcal{M}+c^2\mathcal{A}+\imath\,m\omega \, \mathcal{D},
\qquad C_m = \textup{tr}_\Sigma,
\\
&%B_1(\hat{p})=0, \quad 
B_m(\hat{p})(x) = m^2\omega^2\tilde{B}_m(\hat{p}(x))\\
&\tilde{B}_m(\vec{c})= \begin{cases}
\frac14\sum_{\ell=1}^{m-1} c_\ell c_{m-\ell} 
+ \frac12\sum_{k=m+2:2}^\infty\overline{c_{\frac{k-m}{2}}} c_{\frac{k+m}{2}}
& M=\infty \text{ (a)}\\
\frac14\sum_{\ell=1}^{m-1} c_\ell c_{m-\ell} 
& M\in\mathbb{N}\cup\{\infty\} \text{ (b)}\\
\end{cases} \quad \vec{c}\in\mathbb{C}^M\\
&Q = L^2(\Omega), \quad V= H^2(\Omega), \quad W = L^2(\Omega), \quad Y = L^2(\Omega), 
\end{aligned}
\end{equation}
where the first sum over $\ell$ is empty in the case $m=1$.
Here $B_m(\hat{p}):L^2(\Omega)\to L^2(\Omega)$ is to be understood as a multiplication operator and boundedness $B_m(\hat{p})\in L(L^2(\Omega),L^2(\Omega))$ follows from the fact that $H^2(\Omega)$ is continuously embedded in $L^\infty(\Omega)$ and therefore the functions $p_m$ as well as their products are in $L^\infty(\Omega)$. 
Differentiability of the $B_m$ mappings follows from their polynomial (in fact, quadratic) structure in our particular setting.

We consider both the case (a) that gives full equivalence to the Westervelt equation \eqref{Westervelt} and the simplifications (b) used in our numerical tests. 

\medskip

The abstract structure \eqref{eqn:Fm0}, \eqref{eqn:FDB} together with an extension of the dependency of $\eta$ to $\vec{\eta}=(\eta_m)_{m\in\{1,\ldots,M\}}\subseteq Q^M$ allows one to more generally establish the differential range invariance relation 
\begin{equation}\label{rangeinvar_diff}
\textup{ for all } (\vec{\eta},\hat{p})\in U \, \exists r(\vec{\eta},\hat{p})\in Q^M\times V^M\,: \  F(\vec{\eta},\hat{p})-F(\vec{\eta}_0,\hat{p}_0)=F'(\vec{\eta}_0,\hat{p}_0)r(\vec{\eta},\hat{p}),
\end{equation}
for 
\begin{equation}\label{F}
\begin{aligned}
&F=(G_m,C_m)_{m\in\{1,\ldots,M\}}, \quad \hat{p}=(\hat{p}_m)_{m\in\{1,\ldots,M\}}, \\ 
&r(\vec{\eta},\hat{p})=(r^{\vec{\eta}}_m(\vec{\eta},\hat{p}),r^{\hat{p}}_m(\vec{\eta},\hat{p}))_{m\in\{1,\ldots,M\}}.
\end{aligned}
\end{equation}
Indeed, with 
\[
G_m'(\vec{\eta}_0,\hat{p}_0)(\uldeta,\uldph)=
D_m \uldph_m + \sum_{n=1}^M\frac{\partial B_m}{\partial \hat{p}_n}(\hat{p}_0)\uldph_n\, \vec{\eta}_{0,m} + B_m(\hat{p}_0) \uldeta_m
\]
and 
\[
\begin{aligned}
&r^{\hat{p}}_m(\vec{\eta},\hat{p})=\hat{p}_m-\hat{p}_{0,m}\\
&r^{\vec{\eta}}_m(\vec{\eta},\hat{p})=\eta_m-\eta_{0,m}+
B_m(\hat{p}_0)^{-1}\Bigl(
\bigl(B_m(\hat{p})-B_m(\hat{p}_0)\bigr)\eta_m
-\sum_{n=1}^M\frac{\partial B_m}{\partial \hat{p}_n}(\hat{p}_0)(\hat{p}_m-\hat{p}_{0,m})\, \eta_{0,m}\Bigr)
\end{aligned}
\]
we obtain \eqref{rangeinvar_diff}.
To this end, we assume that $p_0$ is chosen such that for each $m\in\{1,\ldots,M\}$, the operator $B_m(\hat{p}_0):Q\to W$ is an isomorphism. 
%In our concrete setting \eqref{eqn:DBQVW} this requires all components of $\hat{p}_0$ to be bounded away from zero.
Moroever, $r$ is close to the identity in the sense that 
\[
\begin{aligned}
&\|r(\vec{\eta},\hat{p})-((\vec{\eta},\hat{p})-(\vec{\eta}_0,\hat{p}_0))\|_{Q^M\times V^M}\\
&=\Bigl\|B_m(\hat{p}_0)^{-1}\Bigl(
\bigl(B_m(\hat{p})-B_m(\hat{p}_0)\bigr)(\eta_m-\eta_{0,m})\\
&\hspace*{3cm}+\bigl(B_m(\hat{p})-B_m(\hat{p}_0)-\sum_{n=1}^M\frac{\partial B_m}{\partial \hat{p}_n}(\hat{p}_0)(\hat{p}_m-\hat{p}_{0,m})\bigr)\, \eta_{0,m}\Bigr)\Bigr\|_{Q^M\times V^M}\\
&\leq C \|\hat{p}-\hat{p}_0\|_{V^M}\bigl(\|\eta-\eta_0\|_{Q^M}+\|\hat{p}-\hat{p}_0\|_{V^M}\bigr),
\end{aligned}
\]
which implies
\begin{equation}\label{rid}
\|r(\vec{\eta},\hat{p})-((\vec{\eta},\hat{p})-(\vec{\eta}_0,\hat{p}_0))\|_{Q^M\times V^M}
\leq c \|(\vec{\eta},\hat{p})-(\vec{\eta}_0,\hat{p}_0)\|_{Q^M\times V^M}
\end{equation}
for $c\in(0,1)$ in a sufficiently small neighborhood $U$ of $(\vec{\eta}_0,\hat{p}_0)$.

\medskip

Since the artificial dependence of $\vec{\eta}$ on $m$ counteracts uniqueness, we penalise it by a term $P\vec{\eta}\in Q^M$
\[
(P\vec{\eta})_m = \eta_m - \frac{\sum_{n=1}^M n^{-2}\,\eta_n}{\sum_{n=1}^M n^{-2}}, 
\]
where the weights $n^{-2}$ in the $\ell^2$ projection are introduced in order to enforce convergence in case $M=\infty$.
Note that the $n$ independent target $(\eta,\eta,\ldots )$ is clearly not contained in $\ell^2(\mathbb{N}; Q)$ but in the weighed space $\ell^2_w(\mathbb{N}; Q)$ with weights $w_n=n^{-2}$.
We here first of all aim at finding a general $\eta\in Q=L^2(\Omega)$.
In case we want to reconstruct a piecewise constant coefficient $\eta$, we can achieve this by, e.g., adding a total variation term to $P$. 

This penalisation together with condition \eqref{rangeinvar_diff} allows us to rewrite the inverse problem \eqref{eqn:Fm0} as a combination of an ill-posed linear and a well-posed nonlinear problem
\begin{equation}\label{FP}
\begin{aligned}
&F'(\vec{\eta}_0,\hat{p}_0)\hat{r}=h-F(\vec{\eta}_0,\hat{p}_0)\\
&r(\vec{\eta},\hat{p})=\hat{r}\\
&P\vec{\eta}=0
\end{aligned}
\end{equation}
for the unknowns $(\hat{r},\vec{\eta},\hat{p})\in Q^M\times Q^M\times V^M$ (or in $\ell^2_w(\mathbb{N};Q)\times\ell^2_w(\mathbb{N}; Q)\times\ell^2(\mathbb{N}; V)$ in case $M=\infty$). 
Here $(\vec{\eta}_0,\hat{p}_0)\in Q^M\times V^M$ is fixed and in \eqref{rangeinvar_diff} $U\subseteq Q^M\times V^M$ is a neighborhood of $(\vec{\eta}_0,\hat{p}_0)$.

The following regularised frozen Newton method can then be shown to converge.
\begin{equation}\label{frozenNewtonHilbert}
x_{n+1}^\delta \in \mbox{argmin}_{x\in U}
\|F'(x_0)(x-x_n^\delta)+F(x_n^\delta)-h^\delta\|_Y^2+\alpha_n\|\vec{\eta}-\vec{\eta}_0\|_{Q^M}^2+\|P\vec{\eta}\|_{Q^M}^2.
\end{equation}
where $h^\delta\approx h$ is the noisy data, $\alpha_n\to0$ as $n\to\infty$, (e.g. $\alpha_n=\alpha_0 q^n$ for some $q\in(0,1)$), and we abbreviate $x=(\vec{\eta},\hat{p})$.

An essential ingredient of the convergence proof is verification of the fact that the intersection of the nullspaces of $F'(x_0)$ and of $P$ is trivial \cite[Theorem 2]{rangeinvar}.
For this purpose, 
%a linearised uniqueness result on the Westervelt equation from \cite{BB10} is useful.
we require the following geometric condition on the observation manifold $\Sigma$
\begin{equation}\label{eqn:ass_inj_Sigma_rem}
\textup{ for all } j\in\mathbb{N}\ : \quad 
\left(\sum_{k\in K^j} b_k \varphi_j^k(x) = 0 \ \mbox{ for all }x\in\Sigma\right)
\ \Longrightarrow \ \left(b_k=0 \mbox{ for all }k\in K^j\right)
\end{equation}
in terms of the eigensystem $(\varphi_j^k,\lambda_j)_{j\in\mathbb{N},k\in K^j}$ of the selfadjoint positive operator $\mathcal{A}$ defined by \eqref{eqn:calADM}.
This means that the eigenfunctions should preserve their linear independence when restricted to the observation manifold and trivially holds in 1-d, where $\# K^j=1$ for all $j\in\mathbb{N}$.

We will assume that the operators $\mathcal{A}$, $\mathcal{D}$, $\mathcal{M}$ have the same $H$-orthonormal eigenfunctions $\varphi_j^k$ with the eigenvalues $\mu_j$ of $\mathcal{M}$ and $\rho_j$ of $\mathcal{D}$ satisfying  
\begin{equation}\label{eqn:rho_lambda_mu}
\Bigl(\frac{\rho_j}{\lambda_j}=\frac{\rho_\ell}{\lambda_\ell} \textup{ and } 
\frac{\mu_j}{\lambda_j^2}=\frac{\mu_\ell}{\lambda_\ell^2}\Bigr)
\ \Rightarrow \ j=\ell.
\end{equation} 
This is the case, e.g., if $\beta=0$, where $\mathcal{M}$ is the identity and $\mathcal{D}=b\mathcal{A}$, $H=L^2(\Omega)$. Condition \eqref{eqn:rho_lambda_mu} is needed to prove the following linear independence result that will play a role in the linearized uniqueness result Theorem~\ref{thm:linearizeduniqueness}.
Its proof can be found in the appendix.

\begin{lemma}\label{lem:linindep}
Let $(\mu_j)_{j\in\mathbb{N}}$, $(\lambda_j)_{j\in\mathbb{N}}$, $(\rho_j)_{j\in\mathbb{N}}$ $\subseteq\mathbb{C}$ sequence of distinct numbers such that \eqref{eqn:rho_lambda_mu} holds.\\
Then 
\[
\left(\mbox{ for all }m\in\mathbb{N}\, : \ 0=\sum_{j=1}^\infty 
\frac{m^2}{-m^2\omega^2\mu_j+c^2\lambda_j+\imath m \omega \rho_j} c_j \right)
\ \Longrightarrow \ \left(c_j=0 \mbox{ for all }j\in\mathbb{N}\right)
\]
\end{lemma}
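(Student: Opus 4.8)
The plan is to convert the countable family of scalar conditions, one for each harmonic $m\in\mathbb{N}$, into the statement that a single analytic function of one complex variable vanishes identically, and then to read off the coefficients $c_j$ from its residues. After cancelling the nonzero factor $m^2$, the hypothesis says that $\sum_j c_j\,(-\omega^2\mu_j m^2+\imath\omega\rho_j m+c^2\lambda_j)^{-1}=0$ for every $m$. It is cleanest to pass to the spectral variable: in the principal case in which \eqref{eqn:rho_lambda_mu} is meant to hold, namely $\mathcal{M}=\textup{id}$ and $\mathcal{D}=b\mathcal{A}$ (so $\mu_j=1$, $\rho_j=b\lambda_j$), the denominator is affine in $\lambda_j$, equal to $(c^2+\imath m\omega b)\lambda_j-\omega^2 m^2$, and the data become $G(\zeta_m)=0$ for all $m$, where $G(\zeta):=\sum_j \frac{c_j}{\lambda_j-\zeta}$ is a Cauchy transform and $\zeta_m:=\frac{\omega^2 m^2}{c^2+\imath m\omega b}$.

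The engine of the proof is a uniqueness theorem for $G$. The eigenvalues $\lambda_j$ are real and nonnegative, so $G$ is holomorphic off the real axis; moreover, using $(c_j)\in\ell^2$ together with the Weyl asymptotics $\lambda_j\to\infty$ (which give $\sum_j\lambda_j^{-2}<\infty$ for $d\le3$), one checks by Cauchy--Schwarz that $G$ is bounded on each half-plane $\{\textup{Im}\,\zeta\le-\delta\}$. The zeros $\zeta_m$ lie in the open lower half-plane with $\textup{Im}\,\zeta_m\sim -\omega m/b$ and $|\zeta_m|\sim\omega m/b$, so that $\sum_m \frac{|\textup{Im}\,\zeta_m|}{1+|\zeta_m|^2}\sim\sum_m\frac{b}{\omega m}=\infty$. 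Thus the zeros of the bounded holomorphic function $G$ violate the Blaschke condition for the half-plane, which forces $G\equiv0$ there and hence, by analytic continuation, everywhere.

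It remains to extract $c_j=0$. Since $G\equiv0$, every residue of $G$ vanishes; as $\textup{Res}_{\zeta=\lambda_j}G=-c_j$ and the $\lambda_j$ are distinct, we conclude $c_j=0$ for all $j$. For the general (abstract) situation, where $\mu_j,\rho_j,\lambda_j$ need not be tied together, the same scheme applies after factoring each quadratic denominator $-\omega^2\mu_j z^2+\imath\omega\rho_j z+c^2\lambda_j$ into its two simple factors: $G$ is replaced by a meromorphic function of $z$ with poles at the roots $z_j^\pm$, vanishing at every integer $z=m$ and decaying like $z^{-2}$, so a Carlson-type uniqueness theorem again yields $G\equiv0$; condition \eqref{eqn:rho_lambda_mu}, together with the distinctness of the $\lambda_j$, is exactly what should guarantee that distinct indices produce distinct poles (equivalently, pairwise non-proportional quadratics), so that each residue isolates a single $c_j$. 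The degenerate case of a double root, $4c^2\mu_j\lambda_j=\rho_j^2$, is handled directly, the leading Laurent coefficient there being a nonzero multiple of $c_j$.

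I expect the main obstacle to be analytic rather than algebraic: establishing the boundedness (and sufficient decay) of the relevant transform right up to the locus of its poles, so that a half-plane Blaschke theorem -- or, in the abstract complex-coefficient case, a meromorphic Carlson/Phragm\'en--Lindel\"of uniqueness theorem -- legitimately applies to the evaluation ladder $\{\zeta_m\}$ (respectively $\{m\}$). The accompanying point to verify carefully is that \eqref{eqn:rho_lambda_mu} does deliver precisely the non-degeneracy needed for the residue step; this is transparent in the principal case, where it collapses to the distinctness of the $\lambda_j$ and the residue extraction is immediate.
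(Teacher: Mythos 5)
Your approach is genuinely different from the paper's, and in the case you actually work out it is sound. The paper's proof is far more elementary: it substitutes $t=1/m$, so that the evaluation points accumulate at the \emph{finite} point $t=0$; after (formally) clearing denominators, with $w_j(t):=-\mu_j\omega^2+c^2\lambda_j t^2+\imath\omega\rho_j t$, the function $W^{\vec{c}}(t)=\sum_j c_j\prod_{\ell\neq j}w_\ell(t)$ is analytic and vanishes on $\{1/m\}$, hence vanishes identically by the identity theorem alone --- no Blaschke or Carlson machinery; evaluating at the two roots $t_{k\pm}$ of $w_k$ then isolates $\prod_{\ell\neq k}w_\ell(t_{k\pm})\,c_k=0$, and \eqref{eqn:rho_lambda_mu} guarantees exactly that distinct $w_j$, $w_\ell$ have distinct root pairs, so the product is nonzero and $c_k=0$. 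You keep the spectral variable at infinity and pay for it with a heavier uniqueness theorem (the half-plane Blaschke condition), plus structure the abstract lemma does not assume: reality and Weyl growth of the $\lambda_j$ and $(c_j)\in\ell^2$. Within the $\beta=0$ case ($\mu_j\equiv1$, $\rho_j=b\lambda_j$) your Cauchy-transform argument is correct, and it is worth noting that it is in one respect tighter than the paper's own proof, whose infinite product $\prod_\ell w_\ell(t)$, taken literally, diverges when $\lambda_\ell\to\infty$. One repair is needed on your side even there: Cauchy--Schwarz gives uniform boundedness of $G$ on $\{\textup{Im}\,\zeta\le-\delta\}$ only for $d=2$; for $d=3$ the eigenvalue density yields growth of order $|\zeta|^{1/4}$, but $G(\zeta)/(\zeta-\imath)$ is then bounded with the same zeros, so the Blaschke argument survives.

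The genuine gap is your ``general (abstract) situation'' paragraph, and it matters because the lemma is stated for arbitrary complex sequences $(\mu_j),(\lambda_j),(\rho_j)$ subject only to \eqref{eqn:rho_lambda_mu}. For such sequences the poles $z_j^{\pm}$ of $\sum_j c_j\,z^2\bigl(-\omega^2\mu_j z^2+\imath\omega\rho_j z+c^2\lambda_j\bigr)^{-1}$ can lie anywhere in $\mathbb{C}$ --- in particular they may accumulate on, or be dense near, the ray containing the evaluation points $z=m$ --- so the sum need not define a function meromorphic on any half-plane containing $\mathbb{N}$, there is no exponential-type or growth bound available to verify, and Carlson's theorem (which requires holomorphy and a type restriction on vertical lines) does not apply; there is no off-the-shelf ``Carlson theorem for meromorphic functions'' to invoke. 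Your decay claim is also off: with the $m^2$ in the numerator the function tends to a generically nonzero constant as $z\to\infty$, not to zero like $z^{-2}$. These are not cosmetic points --- avoiding exactly them is what the paper's $t=1/m$ substitution accomplishes, by relocating the accumulation point of the data from $\infty$ to $0$, where the identity theorem suffices. Your residue-extraction step and your reading of \eqref{eqn:rho_lambda_mu} as ``distinct indices give distinct pole pairs'' do agree with the paper's side calculation, so the algebraic half of your plan is right; it is the analytic half in the general case that needs to be replaced, most simply by the paper's change of variables.
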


We are now in the position to prove uniqueness for the linearized problem, which, besides being of interest on its own, is also an essestial ingredient to the convergence proof of Newton's method.

\begin{theorem}\label{thm:linearizeduniqueness}
For \eqref{F}, \eqref{eqn:FDB}, \eqref{eqn:DBQVW}, with $M=\infty$ and $\eta$ independent of $m$ (that is, $P\vec{\eta}=0$), $\hat{p}_0$ chosen such that $\hat{p}_{0,m}(x)= \phi(x)\,\psi_m$ for some $\phi\in H^2(\Omega)$, $\phi\not=0$ almost everywhere in $\Omega$, 
%$(\psi_m)_{m\in\mathbb{N}}\in \ell^2(\mathbb{C})$, $\psi_m\not=0$ for all $m\in\mathbb{N}$.
$\psi_m\in\mathbb{C}$, $f_m:=\tilde{B}_m(\vec{\psi})\in\mathbb{C}\setminus\{0\}$ for all $m\in\mathbb{N}$.
\\
Then under the linear independence condition \eqref{eqn:ass_inj_Sigma_rem}, with $\mathcal{A}$, $\mathcal{D}$, $\mathcal{M}$ simultaneously diagonalisable with \eqref{eqn:rho_lambda_mu}, the linearisation $F'(\eta_0,\hat{p}_0)$ at $\eta_0=0$ is injective. 
\end{theorem}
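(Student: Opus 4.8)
The plan is to reduce the injectivity statement to the scalar separation result of Lemma~\ref{lem:linindep}, followed by the geometric condition \eqref{eqn:ass_inj_Sigma_rem}. First I would write out the nullspace of $F'(0,\hat p_0)$ on the subspace $\{P\vec\eta=0\}$. Since $\vec\eta_0=0$, the cross term $\sum_{n}\frac{\partial B_m}{\partial\hat p_n}(\hat p_0)\uldph_n\,\vec\eta_{0,m}$ appearing in $G_m'$ vanishes, and $P\uldeta=0$ forces $\uldeta_m=d\eta$ to be independent of $m$. Hence an element $(\uldeta,\uldph)$ of the nullspace is characterised by the two families of conditions
\[
D_m\uldph_m=-B_m(\hat p_0)\,d\eta,\qquad \mathrm{tr}_\Sigma\uldph_m=0\qquad(m\in\mathbb N),
\]
and the goal is to show that these force $d\eta=0$ and $\uldph_m=0$ for all $m$.

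Next I would exploit the separable ansatz $\hat p_{0,m}=\phi\,\psi_m$. Because $\tilde B_m$ is a quadratic form, $\tilde B_m(\hat p_0(x))=\phi(x)^2\tilde B_m(\vec\psi)=\phi(x)^2 f_m$, so $B_m(\hat p_0)$ is simply multiplication by $m^2\omega^2 f_m\phi^2$ (in case (a) one additionally uses $\phi$ real so that $\phi^2=|\phi|^2$). Diagonalising $D_m=-m^2\omega^2\mathcal M+c^2\mathcal A+\imath m\omega\mathcal D$ in the common eigensystem $(\varphi_j^k,\lambda_j,\mu_j,\rho_j)$ turns $D_m$ into multiplication by $\sigma_{m,j}:=-m^2\omega^2\mu_j+c^2\lambda_j+\imath m\omega\rho_j$ on each group $K^j$, and well-posedness of the forward problem gives $\sigma_{m,j}\neq0$. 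Expanding $g:=\phi^2\,d\eta=\sum_{j,k}g_j^k\varphi_j^k$ and solving the first family of conditions coefficientwise yields
\[
(\uldph_m)_j^k=-\,\frac{m^2\omega^2 f_m}{\sigma_{m,j}}\,g_j^k.
\]
Imposing $\mathrm{tr}_\Sigma\uldph_m=0$ and cancelling the nonzero scalar $m^2\omega^2 f_m$ then gives, for every $m$,
\[
\sum_j\frac{1}{\sigma_{m,j}}\,w_j|_\Sigma=0,\qquad w_j:=\sum_{k\in K^j}g_j^k\varphi_j^k.
\]

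Finally I would separate the eigenvalue groups. Writing $\frac{1}{\sigma_{m,j}}=\frac{1}{m^2}\cdot\frac{m^2}{\sigma_{m,j}}$ and fixing $x\in\Sigma$, the relation above is exactly the hypothesis of Lemma~\ref{lem:linindep} with $c_j=w_j(x)$; the distinctness of $(\mu_j),(\lambda_j),(\rho_j)$ together with \eqref{eqn:rho_lambda_mu} then forces $w_j(x)=0$ for every $j$. Thus $\sum_{k\in K^j}g_j^k\varphi_j^k\equiv0$ on $\Sigma$ for each $j$, and the geometric linear-independence condition \eqref{eqn:ass_inj_Sigma_rem} yields $g_j^k=0$ for all $j,k$, i.e. $g=\phi^2\,d\eta=0$. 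Since $\phi\neq0$ almost everywhere, $d\eta=0$; substituting back into $D_m\uldph_m=0$ and using invertibility of $D_m$ gives $\uldph_m=0$, which establishes injectivity. I expect the main obstacle to be precisely this last step: the scalar Lemma~\ref{lem:linindep} must be applied pointwise (a.e.) in $x\in\Sigma$ to the coefficient functions $w_j$, which requires justifying that the eigenfunction expansion of $g$ and its trace to $\Sigma$ may be summed and evaluated pointwise, and that the interchange of the sum over $j$ with the trace is legitimate — a genuine convergence question in the $M=\infty$ setting rather than a formal manipulation. Secondary technical points are confirming $\sigma_{m,j}\neq0$ for all $m,j$ and dealing with the conjugate-bilinear terms in case (a).
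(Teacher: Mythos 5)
Your proposal is correct and follows essentially the same route as the paper's proof: write the nullspace conditions $D_m\uldph_m=-B_m(\hat p_0)\,\uldeta$, $\textup{tr}_\Sigma\uldph_m=0$, diagonalise $D_m$ in the common eigensystem, solve coefficientwise, apply Lemma~\ref{lem:linindep} pointwise at each fixed $x_0\in\Sigma$ to kill the eigenvalue groups, and then use \eqref{eqn:ass_inj_Sigma_rem} together with $\phi\neq0$ a.e.\ to conclude $\uldeta=0$ and hence $\uldph=0$. The only (inessential) difference is that you carry the factor $\phi^2$ coming from the quadratic homogeneity of $\tilde B_m$ where the paper writes $\phi\,\uldeta$, and you explicitly flag the summation/trace-interchange and the case (a) conjugate terms, which the paper passes over silently; the argument is otherwise identical.
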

\begin{proof}
Using the operators $\mathcal{A}$, $\mathcal{D}$, $\mathcal{M}$ as in \eqref{eqn:calADM} we can write the condition $F'(\eta_0,\hat{p}_0)(\uldeta,\uldp)$ for $\eta_0=0$, $\hat{p}_{0,m}(x)= \phi(x)\,\psi_m$,  $f_m=\tilde{B}_m(\vec{\psi})$ as 
\begin{equation}\label{Fprime0}
[-m^2\omega^2\mathcal{M}+c^2\mathcal{A}+\imath\,m\omega \, \mathcal{D}]\uldp_m+m^2\omega^2f_m\phi\,\uldeta =0,
\textup { and } \textup{tr}_\Sigma \uldp_m=0 \textup{ for all } m\in\mathbb{N}.
\end{equation}
Using the diagonalisation by means of the eigenfunctions $(\varphi_j^k)_{j\in\mathbb{N},k\in K^j}$, by taking the $H$ inner product of \eqref{Fprime0} with $\varphi_j^k$, relying on $\uldp_m=\sum_{j=1}^\infty\sum_{k\in K^j}\langle \uldp_m,\varphi_j^k\rangle_H \varphi_j^k$
and setting $a_j^k=\langle\uldeta\,\phi,\varphi_j^k\rangle_H$
we can rewrite this as 
\[
m^2\omega^2f_m \sum_{j=1}^\infty \frac{1}{-m^2\omega^2\mu_j+c^2\lambda_j+\imath\,m\omega \, \rho_j}\sum_{k\in K^j} a_j^k \varphi_j^k(x_0) =0 \textup{ for all } x_0\in\Sigma, \ m\in\mathbb{N}.
\]
Since the entries 
$\frac{1}{-m^2\omega^2\mu_j+c^2\lambda_j+\imath m \omega \rho_j}$
define an infinite generalised Hankel matrix which is therefore nonsingular (see Lemma~\ref{lem:linindep}),
% in the appendix
this implies 
\[
0=\sum_{k\in K^j} a_j^k\, \varphi_j^k(x_0) \quad \textup{ for all } j\in \mathbb{N}, \ x_0\in\Sigma.
\]
Using \eqref{eqn:ass_inj_Sigma_rem}, we conclude $a_j^k=0$ for all $j\in\mathbb{N}$, $k\in K^j$ and thus $\uldeta=0$.
Returning to the first equation in \eqref{Fprime0} with $\uldeta=0$, due to uniqueness of the solution to this linear homogeneous PDE with homogeneous boundary conditions, we also have $\uldp=0$.
\end{proof} 

According to \cite[Theorem 2]{rangeinvar}, we obtain the following
\begin{theorem}\label{thm:convfrozenNewton}
Let  $x^\dagger=(\vec{\eta}^\dagger,\hat{p}^\dagger)$ be a solution to \eqref{FP} and let for the noise level $\delta\geq\|y^\delta-y\|_Y$ the stopping index $n_*=n_*(\delta)$ be chosen such that 
\begin{equation}\label{nstar}
n_*(\delta)\to0, \quad \delta\sum_{j=0}^{n_*(\delta)-1}c^j\alpha_{n_*(\delta)-j-1}^{-1/2} \to 0 \qquad \textup{ as }\delta\to0
\end{equation}
with $c$ as in \eqref{rid}.
Moreover, let the assumptions of Theorem~\ref{thm:linearizeduniqueness} be satisfied with $B_m(\hat{p}_0)$ as in \eqref{eqn:DBQVW} being an isomorphism from $L^2(\Omega)$ into itself for all $m\in\mathbb{N}$.

Then there exists $\rho>0$ sufficiently small such that for $x_0\in\mathcal{B}_\rho(x^\dagger)\subseteq U$ the iterates $(x_n^\delta)_{n\in\{1,\ldots,n_*(\delta)\}}$ are well-defined by \eqref{frozenNewtonHilbert}, remain in $\mathcal{B}_\rho(x^\dagger)$ and converge in $Q^M\times V^M$, $\|x_{n_*(\delta)}^\delta-x^\dagger\|_{Q^M\times V^M}\to0$ as $\delta\to0$. In the noise free case $\delta=0$, $n_*(\delta)=\infty$ we have $\|x_n-x^\dagger\|_{Q^M\times V^M}\to0$ as $n\to\infty$.
\end{theorem}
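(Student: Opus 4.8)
The plan is to obtain the statement as a direct application of the abstract convergence result \cite[Theorem 2]{rangeinvar}, so that the actual work reduces to verifying its structural hypotheses at the frozen reference point $x_0=(\vec{\eta}_0,\hat{p}_0)$. That theorem guarantees well-definedness, invariance of the ball $\mathcal{B}_\rho(x^\dagger)$, and convergence of the regularised frozen Newton iteration \eqref{frozenNewtonHilbert} under three ingredients: (i) the differential range invariance \eqref{rangeinvar_diff} together with the closeness-to-identity estimate \eqref{rid} with a constant $c\in(0,1)$; (ii) triviality of the intersection $\ker F'(x_0)\cap\ker P$ of the nullspaces of the frozen linearisation and of the penalty; and (iii) an admissible stopping rule, here supplied by \eqref{nstar}. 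I would therefore structure the proof as a checklist of (i)--(iii), each of which is already essentially prepared in the preceding material.

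For ingredient (i) I would simply point to the construction carried out above: the explicit remainder $r=(r^{\vec{\eta}}_m,r^{\hat{p}}_m)_m$ of \eqref{F} realises \eqref{rangeinvar_diff}, and its well-definedness uses precisely the hypothesis that each $B_m(\hat{p}_0)$ is an isomorphism on $L^2(\Omega)$, so that the inverse $B_m(\hat{p}_0)^{-1}$ entering $r^{\vec{\eta}}_m$ is legitimate. The quadratic, hence smooth, dependence of $B_m$ on $\hat{p}$ produces the second-order remainder estimate that yields \eqref{rid}, and shrinking the neighbourhood $U$ around $(\vec{\eta}_0,\hat{p}_0)$ makes the constant strictly less than one. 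No new computation beyond what is displayed is needed here.

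For ingredient (ii) I would use that $P\vec{\eta}=0$ forces $\vec{\eta}$ to be independent of $m$, which is exactly the regime covered by the linearised uniqueness statement. Since the hypotheses of Theorem~\ref{thm:linearizeduniqueness} are assumed to hold --- the separable choice $\hat{p}_{0,m}=\phi\,\psi_m$ with $\phi\neq0$ almost everywhere and $f_m=\tilde{B}_m(\vec{\psi})\neq0$, the geometric condition \eqref{eqn:ass_inj_Sigma_rem}, and simultaneous diagonalisability of $\mathcal{A}$, $\mathcal{D}$, $\mathcal{M}$ with \eqref{eqn:rho_lambda_mu} --- that theorem delivers injectivity of $F'(x_0)$ on the subspace $\ker P$, so $\ker F'(x_0)\cap\ker P=\{0\}$. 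Combined with the fact that the sought solution $x^\dagger=(\vec{\eta}^\dagger,\hat{p}^\dagger)$ satisfies $P\vec{\eta}^\dagger=0$ by virtue of solving \eqref{FP}, this makes the penalised linear problem identify $x^\dagger$ uniquely, as required by the abstract theorem.

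With (i) and (ii) in place I would invoke \cite[Theorem 2]{rangeinvar}: an induction based on $c\in(0,1)$ keeps the iterates inside $\mathcal{B}_\rho(x^\dagger)\subseteq U$, while $\alpha_n\to0$ together with the noise-balancing stopping choice \eqref{nstar} drives the total error to zero, giving convergence as $\delta\to0$ and, in the exact-data case $\delta=0$, $n_*=\infty$, as $n\to\infty$. I expect the main obstacle to be not any single estimate but the reconciliation of the two hypothesis sets at one common frozen point: range invariance asks that $B_m(\hat{p}_0)$ be boundedly invertible and $U$ small, whereas linearised injectivity asks for the very specific separable structure of $\hat{p}_0$ with $\eta_0=0$. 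I would therefore take $x_0$ to be exactly this structured reference point and record that the isomorphism property of $B_m(\hat{p}_0)$ on $L^2(\Omega)$ --- for a multiplication operator meaning its multiplier is bounded away from zero almost everywhere --- is strictly stronger than the mere a.e.\ nonvanishing of $\phi$ used in Theorem~\ref{thm:linearizeduniqueness}, but is listed as an explicit hypothesis here; the final check is that this structured $x_0$ can be placed within $\mathcal{B}_\rho(x^\dagger)$ so that all three ingredients hold simultaneously at it.
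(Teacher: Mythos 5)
Your proposal is correct and follows essentially the same route as the paper: the paper's own proof consists precisely of the pre-established range invariance \eqref{rangeinvar_diff} with the closeness-to-identity estimate \eqref{rid}, the nullspace condition $\ker F'(x_0)\cap\ker P=\{0\}$ supplied by Theorem~\ref{thm:linearizeduniqueness}, and the stopping rule \eqref{nstar}, after which the conclusion is read off directly from \cite[Theorem 2]{rangeinvar}. Your closing observation that the isomorphism property of $B_m(\hat{p}_0)$ (multiplier $m^2\omega^2 f_m\phi^2$ bounded away from zero) is strictly stronger than the a.e.\ nonvanishing of $\phi$ used for linearised injectivity is accurate, and is exactly why the theorem lists both as separate hypotheses.
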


\section*{Appendix}
Proof of Lemma~\ref{lem:linindep}:\\
With 
$w_j(t):=-\mu_j\omega^2+c^2\lambda_j\, t^2+\imath \omega \rho_j\, t$, 
the premise of the lemma reads as
\[
\mbox{ for all }t\in\{\tfrac{1}{m}\, : \, m\in \mathbb{N}\}\, : \quad 
0=\sum_{j=1}^\infty\tfrac{1}{w_j(t)} \, c_j.
\]
Thus, after multiplication with $\prod_{\ell\in\mathbb{N}}w_\ell(t)$ and with $W^{\vec{c}}(t):=\sum_{j=1}^\infty\prod_{\ell\not=j}w_\ell(t)\, c_j$ we get 
\[
\mbox{ for all }t\in\{\tfrac{1}{m}\, : \, m\in \mathbb{N}\}\, : \quad 
0=W^{\vec{c}}(t).
\]
Since $W^{\vec{c}}$ is analytic, this implies that $W^{\vec{c}}\equiv0$ on all of $\mathbb{C}$. Choosing 
$t_{k\pm}=-\frac{\imath\omega}{2c^2}\frac{\rho_k\mp\sqrt{\rho_k^2-\mu_k}}{\lambda_k}$ as the roots of $w_k$, we obtain 
\begin{equation}\label{eqn:wellck0}
\mbox{ for all }k\in\mathbb{N}\,: \quad \prod_{\ell\not=k}w_\ell(t_{k\pm})\, c_k = 0
\end{equation}
A small side calculation yields that under condition \eqref{eqn:rho_lambda_mu},
the roots of the functions $w_j$ are distinct for different $j$:
\[
\begin{aligned}
\Bigl(t_{j+}=t_{\ell+}\textup{ and }t_{j-}=t_{\ell-}\Bigr) \ \Rightarrow \ 
\Bigl(t_{j+}+t_{j-}=t_{\ell+}+t_{\ell-}\textup{ and }t_{j+}+t_{j-}=t_{\ell+}+t_{\ell-}\Bigr) \\ \Rightarrow \ 
\Bigl(\frac{\rho_j}{\lambda_j}=\frac{\rho_\ell}{\lambda_\ell} \textup{ and } 
\frac{\mu_j}{\lambda_j^2}=\frac{\mu_\ell}{\lambda_\ell^2}\Bigr),
\end{aligned}
\]
which by \eqref{eqn:rho_lambda_mu} implies $j=\ell$.\\
Hence, $\prod_{\ell\not=k}w_\ell(t_{k\pm})\not=0$ and from \eqref{eqn:wellck0} we conclude that $c_k=0$ for all $k\in\mathbb{N}$.

\section*{Acknowledgment}
The work of the first author was supported by the Austrian Science Fund through grant P36318; the second author was supported in part by the National Science Foundation through award DMS -2111020.

%\bibliographystyle{plain}
%\bibliography{lit_BB16,lit_nlacoustics}

\end{document}